\begin{document}

\begin{frontmatter}
  \title{$SI_{2}$-quasicontinuous Spaces\thanksref{ALL}}
  \author{Xiaojun Ruan\thanksref{a}\thanksref{myemail}}	
   \author{Xiaoquan Xu\thanksref{b}\thanksref{coemail}}		
   \address[a]{Department of Mathematics, Nanchang University, Nanchang 330031, China}  	
   \address[b]{School of Mathematics and Statistics, Minnan Normal University, Zhangzhou 363000, China}
   						
  \thanks[ALL]{Supported by the National Natural Science Foundation of
China (11661057, 12061046, 12071199, 61866023), GAN PO555 Program of Jiangxi Province and the Natural Science Foundation of
 of Jiangxi Province (20192ACBL20045).}   
   \thanks[myemail]{Email: \href{mailto:myuserid@mydept.myinst.myedu} {\texttt{\normalshape
        rxj54188@163.com}}}

\thanks[coemail]{Email: \href{mailto:couserid@codept.coinst.coedu} {\texttt{\normalshape
        xiqxu2002@163.com}}}

\begin{abstract}
 In this paper, as a common generalization of $SI_{2}$-continuous spaces and $s_{2}$-quasicontinuous posets, we introduce the concepts of $SI_{2}$-quasicontinuous spaces and $\mathcal{GD}$-convergence of nets for arbitrary topological spaces by the cuts. Some characterizations of $SI_{2}$-quasicontinuity of spaces are given. The main results are: (1) a space is $SI_{2}$-quasicontinuous if and only if its weakly irreducible topology is hypercontinuous under inclusion order; (2) A $T_{0}$ space $X$ is $SI_{2}$-quasicontinuous if and only if the $\mathcal{GD}$-convergence in $X$ is topological.
\end{abstract}
\begin{keyword}
  $SI_{2}$-quasicontinuous poset, weakly irreducible topology, $\mathcal{GD}$-convergence
\end{keyword}
\end{frontmatter}
\section{Introduction}
The theory of continuous lattices and domains which arose from computer science and logic, is based on the investigation of directed complete posets (dcpos, for short) in which every directed set has a least upper bound. Since there are important mathematical models where arise posets such as the reals $\mathbb{R}$  and the naturals $\mathbb{N}$ which fail to be dcpos, there is an attempt to carry as much as possible of the theory of continuous
lattices to as general an ordered structure as possible (see  \cite{LawsonXu2004,HLL2009,M1999,M1981,XX2009,PV1990,Zhang1993,Zhang2015}). Moreover, Ern$\acute{e}$ pointed out the importance of the concept of \emph{standard completions} \cite{E1981,E1991,E1999} in the context of generalized continuous posets. In the absence of enough joins, Ern$\acute{e}$ introduced the concept of $s_2$-\emph{continuous} posets and the \emph{weak Scott topology} by means of the cuts instead of joins (see \cite{E1981}), whence they easily apply to generalized settings where certain joins are missing. Quasicontinuous domains introduced by Gierz, Lawson and Stralka \cite{GJA1983} capture many of the essential features of continuous domains  and pop up from time to time generalizing slightly the powerful notion of continuous domains. Recently they have attracted increased attention through the remarkable work of J. Goubault-Larrecq (see \cite{Goubault2012}). As a generalization of $s_2$-continuity, the concept of $s_2$-quasicontinuity was introduced in \cite{Zhang2015} by Zhang and Xu, their basic idea is to generalize the way-below relation between the points to the case of sets. It was proved that $s_2$-quasicontinuous posets equipped with the weak Scott topologies are precisely the hypercontinuous lattices. It is well known that given a topological space one can also define ordered structures (see \cite{E1991,GHKLMS2003,E2009,Zhao2015}). At the 6th International Symposium in Domain Theory, J.D. Lawson emphasized the need to develop the core of domain theory directly in $T_{0}$ topological spaces instead of posets. Moreover, it was pointed out that several results in domain theory can be lifted from the context of posets to $T_{0}$ topological spaces (see~\cite{E2009,Zhao2015,Andradi2018,Andradi2019}). In \cite{E2009}, Ern$\acute{e}$ further proved that the weak Scott topology is the weakest monotone determined topology with a given specialization order. In \cite{GHKLMS2003}, the concept of $\mathcal{S}$-convergence for dcpos was introduced by Scott to characterize continuous domains. It was proved that for a dcpo, the $\mathcal{S}$-convergence is topological if and only if it is a continuous domain. The strong interplay between order theory and topology is often exhibited by the use of different types of convergence; in particular, various notions of continuity of ordered structures can be characterized in this way. In \cite{Zhao2015}, Zhao and Ho defined a new way-below relation and a new topology constructed from any given topology on a set using irreducible sets in a $T_{0}$ topological space replacing directed subsets and investigated the properties of this derived topology and $k$-bounded spaces. It was proved that a space $X$ is $SI$-continuous if and only if $SI(X)$ is a $C$-space.

In this paper, we introduce the concepts of $SI_{2}$-quasicontinuous spaces and $\mathcal{GD}$-convergence of nets for arbitrary topological spaces by the cuts. Some characterizations of $SI_{2}$-quasicontinuity of spaces are given. We show that a space is $SI_{2}$-quasicontinuous if and only if its weakly irreducible topology is hypercontinuous under inclusion order. Finally we arrive at the conclusion that a $T_{0}$ space $X$ is $SI_{2}$-quasicontinuous if and only if the $\mathcal{GD}$-convergence in $X$ is topological.

\section{Preliminaries}

Let $P$ be a partially ordered set (poset, for short). A nonempty set $D\subseteq P$ is \emph{directed} if for any $d_{1}, d_{2}$ in $D$ there exists $d$ in $D$ above $d_{1}$ and $d_{2}$. For $A\subseteq P$, $\uparrow A=\{x\in P: a\leq x$ for some $a\in A\}$. A subset $A$ of $P$ is an \emph{upper set} if $A=\uparrow A$. The order dual concepts are $\downarrow A$ and \emph{lower set}. $A^\uparrow$ and $A^\downarrow$ denote the sets of all \emph{upper} and \emph{lower bounds} of $A$, respectively. A \emph{cut} $\delta$ of $A$ in $P$ is defined by $A^{\delta} =(A^\uparrow)^\downarrow$. We say that a nonempty family $\mathcal{G}$ of sets is \emph{directed} if given $G_1$, $G_2$ in the family $\mathcal{G}$, there exists $G\in\mathcal{G}$ such that $G_1$, $G_2\leq G$, i.e., $G\subseteq\uparrow\! G_1\cap\uparrow\! G_2$. For nonempty subsets $F$ and $G$ of $P$, we say $F$ \emph{approximates} $G$ if for every directed subset $D\subseteq P$, whenever $\bigvee D$ exists, $\bigvee D\in\uparrow\! G$ implies $d\in\uparrow\! F$ for some $d\in D$. A dcpo $P$ is called a \emph{quasicontinuous domain} if for all $x\in P$, $\uparrow\! x$ is the directed (with respect to reverse inclusion) intersection of sets of the form $\uparrow\! F$, where $F$ approximates $\{x\}$ and $F$ is a finite set of $P$. Let $P^{(<\omega)}$ be the set of all nonempty finite subsets of $P$.

Given a poset $P$, we can generate some intrinsic topologies. The upper sets form the \emph{Alexandroff topology} $\alpha(P)$. For all directed sets $D$ of $P$, the \emph{weak Scott topology} $\sigma_2(P)$ consists of all upper sets $U$ such that $D^\delta\cap U\neq\emptyset$ implies $D\cap U\neq\emptyset$. Obviously if $P$ is a dcpo, then the weak Scott topology coincides with the usual \emph{Scott topology} $\sigma(P)$, which consists of all upper sets $U$ such that $\bigvee D\in U$ implies $D\cap U\neq\emptyset$ for all directed sets $D$ in $P$. For $x\in P$, the topology
generated by the collection of sets $P\mathop{\backslash}\downarrow \! x$ (as a subbase) is called the \emph{upper topology} and is denoted by $\upsilon(P)$. It is easy to see that $\upsilon(P)\subseteq\sigma_2(P)\subseteq\sigma(P)\subseteq\alpha(P)$.

For a topological space $(X, \tau)$, the \emph{specialization order} $\leq$ on $X$ is defined by $y\leq x$ if and only if $y\in$ $\operatorname{cl} (\{x\})$. If $(X, \tau)$ is $T_{0}$ then the specialization order is a partial order. The specialization order on $X$ is denoted by $\leq_{\tau}$ if there is need to emphasize the topology $\tau$. Keep in mind that the specialization order of the Alexandroff topology on a poset coincides with the underlying order.

Let $(X, \tau)$ be a topological space. A nonempty subset $F\subseteq X$ is called \emph{irreducible} if for every closed sets $B$ and $C$, whenever $F\subseteq B\bigcup C$, one has either $F\subseteq B$ or $F\subseteq C$. The set of all irreducible sets of the topological space $(X, \tau)$ will be denoted by Irr$_{\tau}(X)$ or Irr$(X)$.

Unless otherwise stated, in the context of $T_0$ spaces, all order-theoretic concepts such as lower sets, upper sets, etc, are taken with respect to the specialization order of the underlying spaces.
\begin{lemma}\label{remark-2.1}
Let $(X, \tau)$ be a $T_{0}$ space.
\begin{enumerate}
\item[\rm(1)] If $D\subseteq X$ is a directed set with respect to the specialization order, then $D$ is irreducible;
\item[\rm(2)] If $U\subseteq X$ is an open set, then $U$ is an upper set; Similarly, if $F\subseteq X$ is a closed set, then $F$ is a lower set.
\end{enumerate}
\end{lemma}

\begin{definition}{\rm(\cite{E1981,Zhang2015})}\label{definition-2.1}
Let $P$ be a poset and $x\in P$, $A, B\subseteq P$.
\begin{enumerate}
\item[\rm(1)]We say that $A$ is $way$ $below$ $B$ and write $A\ll B$ if for all directed sets $D\subseteq P$, $B\cap D^\delta\neq\emptyset$ implies $A\cap D\neq\emptyset$. We write $A\ll x$ for $A\ll\{x\}$ and $y\ll B$ for $\{y\}\ll B$. Let $w(x)=\{F\in P^{(<\omega)}:F\ll x\}$, $\Downarrow x=\{y\in P:y\ll x\}$.
\item[\rm(2)]$P$ is called $s_2$-$continuous$ if for all $x\in P$, $x\in (\Downarrow\! x)^\delta$ and $\Downarrow\! x$ is directed.
\item[\rm(3)]$P$ is called $s_2$-$quasicontinuous$ if for each $x\in P$, $\uparrow\! x=\bigcap\{\uparrow\! F:F$ is finite, $F\ll x$ \} and $w(x)=\{\uparrow F\subseteq P:F$ is finite and $F\ll x\}$ is directed.

\end{enumerate}
\end{definition}

\begin{definition}(\cite{Zhao2015})\label{definition-2.2}
Let $(X, \tau)$ be a $T_{0}$ space. For $x, y\in X$, define $x\ll_{SI} y$ if for all irreducible sets $F$, $y\leq \bigvee F$ implies there exists $e\in F$ such that $x\leq e$ whenever $\bigvee F$ exists. The set $\{y\in X: y\ll_{SI} x\}$ is denoted by $\Downarrow_{SI} x$ and the set $\{y\in X: x\ll_{SI} y\}$ by $\Uparrow_{SI} x$.
\end{definition}

\begin{definition}(\cite{Luo2019,Ruan2019})
Let $(X, \tau)$ be a $T_{0}$ space and $x, y\in X$. Define $x\ll_{r} y$ if for every irreducible set $E$, $y\in E^{\delta}$ implies there exists $e\in E$ such that $x\leq e$. We denote the set $\{y\in X: y\ll_{r} x\}$ by $\Downarrow_{r} x$ and the set $\{y\in X: x\ll_{r} y\}$ by $\Uparrow_{r} x$.
\end{definition}
\begin{definition}(\cite{Ruan2019})\label{definition-2.3}
Let $(X, \tau)$ be a $T_{0}$ space. $X$ is called $SI_{2}$-continuous if the following conditions are satisfied:
\begin{enumerate}
\item[\rm(1)] $\Downarrow_{r} x$ is directed for all $x\in X$;
\item[\rm(2)] $x\in (\Downarrow_{r} x)^{\delta}$ for all $x\in X$;
\item[\rm(3)] $\Uparrow_{r} x\in \tau$ for all $x\in X$.
\end{enumerate}
\end{definition}

\begin{definition}(\cite{Luo2019,Ruan2019})\label{definition-2.4}
Let $(X, \tau)$ be a $T_{0}$ space. A subset $U\subseteq X$ is called weakly irreducibly open if the following conditions are satisfied:
\begin{enumerate}
\item[\rm(1)] $U\in \tau$;
\item[\rm(2)] $F^{\delta}\bigcap U\neq\emptyset$ implies $F\bigcap U\neq\emptyset$ for all $F\in$Irr$_{\tau}(X)$.
\end{enumerate}
It is easy to check that the set of all weakly irreducibly open sets of $(X, \tau)$ is a topology, which will be called weakly irreducible topology of $X$ and will be denoted by $\tau_{SI_{2}}(X)$. $SI_2(X,\tau)=(X, \tau_{SI_{2}}(X))$ denotes the weakly irreducible topological space.
\end{definition}

The following lemma is the well-known Rudin Lemma.

\begin{lemma}{\rm(\cite{GHKLMS2003})}\label{lemma-2.1}\label{lem:Rudin}
Let ${\cal F}$ be a directed family of nonempty finite subsets of a poset $P$. Then there exists a directed set $D\subseteq \bigcup\limits_{F\in\mathcal{F}}F$ such that $D\cap F\neq \emptyset$ for all $F\in \mathcal{F}$.
\end{lemma}
\begin{definition}(\cite{Ruan2019})
Let $(X, \tau)$ be a $T_{0}$ space. A net $(x_{j})_{j\in J}$ in $X$ is said to converge to $x\in X$ if there exists a directed set $D\subseteq X$ with respect to the specialization order such that
\begin{enumerate}
\item[\rm(1)] $x\in D^{\delta}$;
\item[\rm(2)] For all $d\in D$, $d\leq x_{j}$ holds eventually.
\end{enumerate}
In this case we write $x\equiv_{\mathcal{D}}$$\lim$ $x_{j}$.
\end{definition}

\section{SI$_2$-quasicontinuous spaces}

In this section, the concept of $SI_2$-quasicontinuous spaces is introduced. Some properties of $SI_2$-quasicontinuous spaces are discussed.

\begin{definition}(\cite{Luo2019})\label{definition-3.1}
Let $X$ be a $T_{0}$ space and $x\in X$, $A, B\subseteq X$. We say that $A$ is $way$ $below$ $B$ and write $A\ll_{r} B$ if for all irreducible sets $E\subseteq X$, $B\cap E^\delta\neq\emptyset$ implies $A\cap E\neq\emptyset$. We write $A\ll_{r} x$ for $A\ll_{r}\{x\}$ and $y\ll_{r} B$ for $\{y\}\ll_{r} B$. The set $\{x\in X: A\ll_{r} x\}$ is denoted by $\Uparrow_{r} A$.
\end{definition}

\begin{definition}\label{definition-3.2}
A $T_{0}$ space $(X, \tau)$ is called $SI_2$-$quasicontinuous$ if for all $x\in X$, the following conditions are satisfied:
\begin{enumerate}
\item[\rm(1)]$w(x)=\{F\subseteq X:F\in X^{(<\omega)}$ and $F\ll_{r} x\}$ is directed;
\item[\rm(2)]$\uparrow\! x=\bigcap\{\uparrow\! F:F\in w(x)$\};
\item[\rm(3)]For any $H\in X^{(<\omega)}$, $\Uparrow_{r}H\in \tau$.
\end{enumerate}
\end{definition}

The following proposition is simple and the proof is omitted.

\begin{proposition}\label{proposition-3.1}
Let $X$ be a $T_{0}$ space and $G$, $H$, $K$, $M\subseteq X$. Then
\begin{enumerate}
\item[\emph{(1)}] $G\ll_{r} H\Leftrightarrow G\ll_{r} h$ for all $h\in H$;
\item[\emph{(2)}] $G\ll_{r} H\Leftrightarrow \uparrow G\ll_{r} \uparrow H$;
\item[\emph{(3)}] $G\ll_{r} H\Rightarrow G\leq H$, that is, $\uparrow H\subseteq\uparrow G$. Hence $\uparrow H\subseteq\bigcap \{\uparrow F: F\in w(H)\}$.
\item[\emph{(4)}] $G\leq H\ll_{r} K\leq M\Rightarrow G\ll_{r} M$.
\end{enumerate}
\end{proposition}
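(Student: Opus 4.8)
The plan is to reduce everything to a few elementary facts about cuts together with the monotonicity statement (4), from which (2) follows formally. First I record the facts I will use repeatedly. For any $E\subseteq X$ the cut $E^{\delta}=(E^{\uparrow})^{\downarrow}$ is the set of all lower bounds of the upper set $E^{\uparrow}$, so $E^{\delta}$ is a \emph{lower set}, and moreover $E\subseteq E^{\delta}$. Every singleton $\{h\}$ is irreducible, and a direct computation gives $\{h\}^{\delta}=\downarrow h$. Finally, recall that the order on subsets is $G\leq H\Leftrightarrow\uparrow H\subseteq\uparrow G$, so that $A$ and $\uparrow A$ are equivalent in this preorder (each is $\leq$ the other).

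Statements (1) and (3) are then immediate. For (1), observe that $\{h\}\cap E^{\delta}\neq\emptyset$ means exactly $h\in E^{\delta}$; for the forward direction, if $h\in H$ and $h\in E^{\delta}$ then $H\cap E^{\delta}\neq\emptyset$, and for the converse, any witness $h\in H\cap E^{\delta}$ supplies the required instance of $G\ll_{r}h$. For (3), fix $h\in H$ and test $G\ll_{r}H$ against the irreducible set $E=\{h\}$: since $h\in\downarrow h=\{h\}^{\delta}$ we have $H\cap E^{\delta}\neq\emptyset$, so the relation forces $E$ to meet $\uparrow G$, i.e. $h\in\uparrow G$; thus $H\subseteq\uparrow G$ and $\uparrow H\subseteq\uparrow G$. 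The final clause follows by applying this to each $F\in w(H)$ (giving $\uparrow H\subseteq\uparrow F$) and intersecting over all such $F$.

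The substance is in (4), and (2) is then a formal consequence. To prove $G\ll_{r}M$, take an irreducible $E$ with $M\cap E^{\delta}\neq\emptyset$ and pick $m\in M\cap E^{\delta}$. From $K\leq M$ (i.e. $\uparrow M\subseteq\uparrow K$) we obtain some $k\in K$ with $k\leq m$; since $E^{\delta}$ is a lower set, $k\in E^{\delta}$, whence $K\cap E^{\delta}\neq\emptyset$. Applying $H\ll_{r}K$ yields an $e\in E$ lying above an element of $H$, and $G\leq H$ then places $e$ above an element of $G$, i.e. $E$ meets $\uparrow G$; hence $G\ll_{r}M$. For (2), I instantiate (4) twice with the trivial comparisons $\uparrow G\leq G$, $H\leq\uparrow H$ and $G\leq\uparrow G$, $\uparrow H\leq H$, obtaining the two implications between $G\ll_{r}H$ and $\uparrow G\ll_{r}\uparrow H$.

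I expect the only genuine obstacle to be bookkeeping about \emph{which side} of the relation is sensitive to upper versus lower closure: the hypothesis side $M\cap E^{\delta}$ is stable under replacing a set by its upper set precisely because $E^{\delta}$ is a lower set, while the conclusion side is stable under passing to the upper set by design. Keeping these two monotonicities straight is exactly what makes (4), and therefore (2), go through; the remaining manipulations are routine.
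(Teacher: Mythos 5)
Your proof is correct and complete. There is nothing in the paper to compare it against: the authors state that the proposition ``is simple and the proof is omitted,'' so yours is the only written argument. Your decomposition --- proving (1), (3) and (4) directly and then obtaining (2) as a formal instance of (4) via the order-equivalence of $A$ and $\uparrow\! A$ --- is clean and free of circularity, and the supporting facts you invoke (singletons are irreducible with $\{h\}^{\delta}=\downarrow\! h$, cuts $E^{\delta}=(E^{\uparrow})^{\downarrow}$ are lower sets) are all valid. One point deserves to be made explicit rather than left implicit: Definition 3.1 literally states the conclusion of $A\ll_{r}B$ as ``$A\cap E\neq\emptyset$,'' whereas your argument reads it as ``$E\cap\uparrow\! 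A\neq\emptyset$.'' That reading is not optional --- it is forced. Under the literal wording, the backward implication in (2) and statement (4) are false: in the two-point chain $0<1$ with the Alexandroff topology, taking $G=\{0\}$ and $H=K=M=\{1\}$ one has $G\leq H\ll_{r}K\leq M$, yet for the irreducible set $E=\{1\}$ (with $1\in E^{\delta}$) one gets $G\cap E=\emptyset$. Your reading is also the one the paper itself uses downstream, e.g.\ in the proof of Theorem \ref{theorem-3.1}(3), where $G\ll_{r}H\ll_{r}E^{\delta}$ is used to produce $g\in G$, $e\in E$ with $g\leq e$. With that convention fixed, every step of your argument goes through.
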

Clearly, an $SI_2$-continuous space must be $SI_2$-quasicontinuous, but the converse may not be true.
\begin{example}\label{example-3.1}
Let $P=\{a\}\cup\{a_n:n\in \mathbb{N}\}$, where $\mathbb{N}$ denotes the set of all positive integers. The partial order on $P$ is defined by setting $a_n\leq a_{n+1}$ for all $n\in \mathbb{N}$, and $a_1\leq a$. Then $P$ endowed with the Alexandroff upper topology is an $SI_2$-quasicontinuous space which is not $SI_2$-continuous.
\end{example}

\begin{lemma}
Let $P$ be a poset. Then $SI_{2}(P, \alpha(P))=(P, \sigma_{2}(P))$.
\end{lemma}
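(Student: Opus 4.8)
The lemma claims that for any poset $P$, the weakly irreducible topology derived from the Alexandroff topology $\alpha(P)$ coincides with the weak Scott topology $\sigma_2(P)$. So I need to show set equality of two topologies on $P$.

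**Recalling the definitions.** The Alexandroff topology $\alpha(P)$ has all upper sets as opens. Its specialization order coincides with the underlying order of $P$ (stated in the preliminaries). The weakly irreducible topology $\tau_{SI_2}(X)$ consists of sets $U$ that are $\tau$-open AND satisfy: $F^\delta \cap U \neq \emptyset \Rightarrow F \cap U \neq \emptyset$ for all $F \in \mathrm{Irr}_\tau(X)$. Meanwhile $\sigma_2(P)$ consists of upper sets $U$ such that $D^\delta \cap U \neq \emptyset \Rightarrow D \cap U \neq \emptyset$ for all **directed** sets $D$.

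**The key asymmetry.** The definition of $\sigma_2$ quantifies only over directed sets $D$, whereas $\tau_{SI_2}$ quantifies over all irreducible sets $F$. So the two conditions look different. The bridge is Lemma \ref{remark-2.1}(1): every directed set is irreducible (in the specialization order). Let me now plan the two inclusions.

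**Strategy — two inclusions.**
(i) *Showing $\tau_{SI_2}(P,\alpha(P)) \subseteq \sigma_2(P)$.* Let $U$ be weakly irreducibly open. First, $U \in \alpha(P)$ means $U$ is an upper set, so $U$ satisfies the upper-set requirement of $\sigma_2$. For the cut condition: given any **directed** $D$ with $D^\delta \cap U \neq \emptyset$, Lemma \ref{remark-2.1}(1) says $D$ is irreducible in $\alpha(P)$, so the weakly-irreducible condition applies directly to $D$, yielding $D \cap U \neq \emptyset$. Hence $U \in \sigma_2(P)$.

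(ii) *Showing $\sigma_2(P) \subseteq \tau_{SI_2}(P,\alpha(P))$.* Let $U \in \sigma_2(P)$. Then $U$ is an upper set, hence open in $\alpha(P)$, giving condition (1) of weak-irreducibility. The work is condition (2): take an arbitrary irreducible set $F$ with $F^\delta \cap U \neq \emptyset$, and show $F \cap U \neq \emptyset$.

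**The main obstacle and how to clear it.** The hard part is precisely step (ii), condition (2): I must pass from an arbitrary irreducible set $F$ to a directed set to invoke the $\sigma_2$ hypothesis. The clean way is to characterize irreducible sets of $\alpha(P)$. I expect that in the Alexandroff topology, the irreducible sets are exactly the directed sets (or at least have the same cuts as directed sets). Concretely, I would verify that $F$ is irreducible in $\alpha(P)$ if and only if $F$ is directed with respect to the specialization order: the ``if'' direction is Lemma \ref{remark-2.1}(1), and for ``only if'' I would use that the complements of principal down-sets $P \setminus \downarrow x$ are $\alpha(P)$-open, so their complements $\downarrow x$ are closed; irreducibility of $F$ against the two closed sets $\downarrow x_1$ and $\downarrow x_2$ (or rather against closures of suitable points) forces the existence of a common upper bound in $F$, i.e.\ directedness. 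Once I know every $\alpha(P)$-irreducible set is directed, step (ii) is immediate: $F$ is directed, so $F^\delta \cap U \neq \emptyset$ yields $F \cap U \neq \emptyset$ by the defining property of $\sigma_2$. Combining (i) and (ii) gives the equality $\tau_{SI_2}(P,\alpha(P)) = \sigma_2(P)$, completing the proof. I should double-check the irreducibility characterization against the precise closed sets of $\alpha(P)$ (which are the lower sets), since that is where a subtle error could creep in.
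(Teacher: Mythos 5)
The paper states this lemma without proof, so there is nothing to compare against; judged on its own, your two-inclusion strategy is correct and is the standard argument: both topologies consist of upper sets satisfying a cut condition, and the whole content is that the $\alpha(P)$-irreducible sets are exactly the directed sets. Your one concrete misstep is in the witnesses for the ``only if'' half of that characterization: testing irreducibility of $F$ against the closed sets $\downarrow x_1$ and $\downarrow x_2$ does not work, since $F\subseteq\downarrow x_1\cup\downarrow x_2$ need not hold for $x_1,x_2\in F$. The correct choice is the pair of lower (hence $\alpha(P)$-closed) sets $P\setminus\uparrow x_1$ and $P\setminus\uparrow x_2$: if $x_1,x_2\in F$ have no common upper bound in $F$, then $F\subseteq(P\setminus\uparrow x_1)\cup(P\setminus\uparrow x_2)$, while $x_i\in F\cap\uparrow x_i$ rules out $F\subseteq P\setminus\uparrow x_i$ for each $i$, contradicting irreducibility. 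You explicitly flagged this as the place to double-check, so this is an imprecision to repair rather than a gap in the approach; with that fix the proof goes through exactly as you outline.
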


\begin{remark}
\begin{enumerate}
\item[\rm(1)] Let $P$ be a poset. Then $P$ is an $s_{2}$-quasicontinuous poset if and only if it is an $SI_{2}$-quasicontinuous space with respect to the Alexandroff topology.
\item[\rm(2)] Let $(X, \tau)$ be a $T_{0}$ space. If $X$ is an $SI_{2}$-quasicontinuous space, then it is also an $s_{2}$-quasicontinuous poset under the specialization order. But the converse may not be true.
\end{enumerate}
\end{remark}

\begin{example}\label{example-3.1}
Let $X$ be an infinite set with a cofinite topology $\tau$. Then it is a $T_{1}$ space. Clearly it is an antichain under the specialization order, and hence it is an $s_{2}$-continuous poset. Thus it is also $s_{2}$-quasicontinuous. But $\Uparrow_{r} x=\{x\}\not\in\tau$ for all $x\in X$, then $(X, \tau)$ is not an $SI_{2}$-quasicontinuous space.
\end{example}

\begin{lemma}[\cite{Luo2019}]\label{lemma-3.2}
Let $\mathcal{F}$ be a directed family of nonempty finite sets in a $T_{0}$ space $X$. If $G\ll_{r} x$ with $\bigcap\mathcal{F}\subseteq \uparrow x$ then there exists some $F\in \mathcal{F}$ such that
$F\subseteq \uparrow G$.
\end{lemma}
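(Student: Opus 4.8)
The plan is to argue by contradiction, with the Rudin Lemma (Lemma~\ref{lemma-2.1}) as the engine. Recall from Definition~\ref{definition-3.1} that $G\ll_{r} x$ means that every irreducible set $E$ with $x\in E^{\delta}$ satisfies $G\cap E\neq\emptyset$; and by Lemma~\ref{remark-2.1}(1) any directed set is irreducible, so directed sets are exactly the objects one feeds into $\ll_{r}$. Accordingly, I would \emph{assume} that the conclusion fails, i.e. that $F\not\subseteq\uparrow\! G$, equivalently $F\setminus\uparrow\! G\neq\emptyset$, for \emph{every} $F\in\mathcal{F}$, and aim to produce from $G\ll_{r}x$ a point of $G$ lying inside a set that has been deliberately arranged to avoid $\uparrow\! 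G$.

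First I would replace $\mathcal{F}$ by $\mathcal{F}'=\{F\setminus\uparrow\! G: F\in\mathcal{F}\}$ and check that this is again a directed family of nonempty finite sets. Nonemptiness is exactly the contradiction hypothesis and finiteness is clear. For directedness, given $F_{1},F_{2}\in\mathcal{F}$ pick $F_{3}\in\mathcal{F}$ with $F_{3}\subseteq\uparrow\! F_{1}\cap\uparrow\! F_{2}$; then for any $y\in F_{3}\setminus\uparrow\! G$ there are $z_{i}\in F_{i}$ with $z_{i}\leq y$, and since $\uparrow\! G$ is an upper set and $y\notin\uparrow\! G$, necessarily $z_{i}\notin\uparrow\! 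G$, so $z_{i}\in F_{i}\setminus\uparrow\! G$. This yields $F_{3}\setminus\uparrow\! G\subseteq\uparrow(F_{1}\setminus\uparrow\! G)\cap\uparrow(F_{2}\setminus\uparrow\! G)$, which is precisely directedness in the required sense. I would then apply Lemma~\ref{lemma-2.1} to $\mathcal{F}'$ to obtain a directed set $D\subseteq\bigcup_{F\in\mathcal{F}}(F\setminus\uparrow\! G)$ with $D\cap(F\setminus\uparrow\! G)\neq\emptyset$ for every $F\in\mathcal{F}$, and note $D$ is irreducible by Lemma~\ref{remark-2.1}(1).

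The crucial step is to show $x\in D^{\delta}=(D^{\uparrow})^{\downarrow}$. To that end I would take an arbitrary upper bound $u$ of $D$; for each $F\in\mathcal{F}$ choose $d_{F}\in D\cap(F\setminus\uparrow\! G)\subseteq F$, so that $d_{F}\leq u$ forces $u\in\uparrow\! F$. As $F$ ranges over $\mathcal{F}$ this gives $u\in\bigcap_{F\in\mathcal{F}}\uparrow\! F\subseteq\uparrow\! x$ (reading the hypothesis $\bigcap\mathcal{F}\subseteq\uparrow\! x$ as the filtered intersection of the upper sets $\uparrow\! F$), whence $x\leq u$. Thus $x$ is a lower bound of $D^{\uparrow}$, i.e. $x\in D^{\delta}$. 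Now $G\ll_{r}x$ applied to the irreducible set $D$ with $x\in D^{\delta}$ gives $G\cap D\neq\emptyset$; but $D$ is contained in $\bigcup_{F}(F\setminus\uparrow\! G)$, which is disjoint from $\uparrow\! G\supseteq G$, so $G\cap D=\emptyset$ — a contradiction. Hence some $F\in\mathcal{F}$ satisfies $F\subseteq\uparrow\! G$.

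I expect the main obstacle to be the cut computation $x\in D^{\delta}$, since it is exactly here that the hypothesis on $\bigcap_{F}\uparrow\! F$ must be matched against the set $D$ delivered by Rudin's Lemma; one must be careful that $\bigcap\mathcal{F}\subseteq\uparrow\! x$ is interpreted as $\bigcap_{F\in\mathcal{F}}\uparrow\! F\subseteq\uparrow\! x$, as this is the only reading under which ``$D$ meets every $F$'' upgrades to ``$x$ lies below every upper bound of $D$''. The other delicate point is the directedness of $\mathcal{F}'$, which rests essentially on $\uparrow\! G$ being an upper set; everything else is bookkeeping.
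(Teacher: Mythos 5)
Your argument is correct, and it is essentially the standard one: the paper itself cites this lemma to \cite{Luo2019} without proof, but the identical strategy (pass to the directed family $\{F\setminus\uparrow\!G : F\in\mathcal{F}\}$, apply Rudin's Lemma, compute $x\in D^{\delta}$ from $\bigcap_{F\in\mathcal{F}}\uparrow\!F\subseteq\uparrow\!x$, and derive a contradiction from $G\ll_{r}x$) is exactly what the paper uses for the analogous Proposition~\ref{proposition-4.1}, with $\uparrow\!G$ playing the role of the weakly irreducibly open set $U$. Your explicit verification that the truncated family is still directed, and your reading of $\bigcap\mathcal{F}$ as $\bigcap_{F\in\mathcal{F}}\uparrow\!F$, are both correct and in fact slightly more careful than the paper's own exposition.
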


We now derive the interpolation property of $SI_2$-quasicontinuous spaces.
\begin{theorem}\label{theorem-3.1}
Let $X$ be an $SI_2$-quasicontinuous space. Then
\begin{enumerate}
\item[\rm(1)]For $x\in X, F\ll_{r} x$ implies that there exists some $G \in w(x)$ such that $F\ll_{r} G\ll_{r} x$.
\item[\rm(2)]For any $F, H\in X^{(<\omega)}, F\ll_{r} H$ implies that there exists some $G \in X^{(<\omega)}$ such that $F\ll_{r} G\ll_{r} H$.
\item[\rm(3)]For all $F\in X^{(<\omega)}, E\in Irr(X), F\ll_{r} E^{\delta}$ implies that there exists some $e\in E$ with $F\ll_{r} e$.
\end{enumerate}
\end{theorem}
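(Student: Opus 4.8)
The plan is to reduce the whole statement to part (1) and prove (1) by a Rudin-Lemma compactness argument. Part (3) should come essentially for free: by Proposition \ref{proposition-3.1}(1), $F\ll_{r} E^\delta$ is equivalent to $F\ll_{r} z$ for every $z\in E^\delta$, and since the cut operator is extensive we always have $E\subseteq E^\delta$; as $E$ is irreducible (hence nonempty), any $e\in E$ then satisfies $F\ll_{r} e$, which is even stronger than the asserted existence. For part (2), given $F\ll_{r} H$ with $H=\{h_1,\dots,h_n\}$, Proposition \ref{proposition-3.1}(1) yields $F\ll_{r} h_i$ for each $i$; applying (1) to each $h_i$ produces finite $G_i$ with $F\ll_{r} G_i\ll_{r} h_i$, and I would set $G=\bigcup_i G_i$. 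Then $F\ll_{r} G$ follows from Proposition \ref{proposition-3.1}(1) (each $g\in G$ lies in some $G_i$), while $G\ll_{r} H$ follows because enlarging the first argument preserves $\ll_{r}$ (if $A\subseteq A'$ and $A\ll_{r} b$ then $A'\ll_{r} b$) together again with Proposition \ref{proposition-3.1}(1). Thus everything hinges on (1).

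For (1), I would first translate the goal: by Proposition \ref{proposition-3.1}(1), $F\ll_{r} G$ is the same as $G\subseteq\Uparrow_{r} F$, so I must find $G\in w(x)$ with $\uparrow\! G\subseteq\Uparrow_{r} F$. The hypotheses give three ingredients: $w(x)$ is directed and $\uparrow\! x=\bigcap\{\uparrow\! G:G\in w(x)\}$ (Definition \ref{definition-3.2}(1),(2)), and $\Uparrow_{r} F\in\tau$ with $x\in\Uparrow_{r} F$ since $F\ll_{r} x$ (Definition \ref{definition-3.2}(3)); being open, $\Uparrow_{r} F$ is an upper set, so $\uparrow\! x\subseteq\Uparrow_{r} F$. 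The argument then runs by contradiction: assume no $G\in w(x)$ has $\uparrow\! G\subseteq\Uparrow_{r} F$. Writing $C=X\setminus\Uparrow_{r} F$ (closed, hence a lower set by Lemma \ref{remark-2.1}(2)), each $G\cap C$ is nonempty, and the family $\{G\cap C:G\in w(x)\}$ is directed because $C$ is a lower set and $w(x)$ is directed. By the Rudin Lemma \ref{lemma-2.1} there is a directed set $D\subseteq\bigcup_{G\in w(x)}(G\cap C)\subseteq C$ meeting every $G\in w(x)$. Since $D$ meets each $G$, every upper bound of $D$ lies in every $\uparrow\! G$, so $D^\uparrow\subseteq\bigcap\{\uparrow\! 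G:G\in w(x)\}=\uparrow\! x$, whence $x$ is a lower bound of $D^\uparrow$, i.e. $x\in D^\delta$.

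At this point I have $D\subseteq C$ directed with $x\in D^\delta$, while $x\in\Uparrow_{r} F$. The contradiction I am after is $x\notin\Uparrow_{r} F$, and this is the main obstacle: it amounts to showing that the closed set $C$ absorbs the cuts of its directed (equivalently, by Lemma \ref{remark-2.1}(1), irreducible) subsets, i.e. that $\Uparrow_{r} F$ is not merely open but \emph{weakly irreducibly open}, $\Uparrow_{r} F\in\tau_{SI_{2}}(X)$. This does not follow from openness alone---the directed set $D$ already exhibits $x\in D^\delta\cap\Uparrow_{r} F$ with $D\cap\Uparrow_{r} F=\emptyset$ whenever interpolation is assumed to fail---so the crux is to prove $\Uparrow_{r} F\in\tau_{SI_{2}}(X)$ directly from the three defining conditions, where I expect to feed a candidate witness through Definition \ref{definition-3.2}(2) and use Lemma \ref{lemma-3.2} to pass from an order containment $\subseteq\uparrow\!(\cdot)$ back up to the relation $\ll_{r}$. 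Granting $\Uparrow_{r} F\in\tau_{SI_{2}}(X)$, weak irreducible openness applied to the irreducible set $D$ gives $D^\delta\cap\Uparrow_{r} F=\emptyset$, contradicting $x\in D^\delta\cap\Uparrow_{r} F$; hence some $G\in w(x)$ satisfies $\uparrow\! G\subseteq\Uparrow_{r} F$, i.e. $F\ll_{r} G\ll_{r} x$, which proves (1).
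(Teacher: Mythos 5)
Your treatments of (2) and (3) are fine: (2) is exactly the paper's argument, and for (3) your observation that $E\subseteq E^{\delta}$ together with Proposition \ref{proposition-3.1}(1) makes the literal statement immediate is correct (the paper instead routes (3) through the interpolation of parts (1) and (2), which the statement as written does not require). The problem is part (1), on which your (2) depends and for which the paper itself only gives a pointer to the Compendium. Everything up to the construction of the directed set $D\subseteq X\setminus\Uparrow_{r}F$ with $x\in D^{\delta}$ and $D$ meeting every $G\in w(x)$ is correct, but the proof then stops at precisely the decisive step: you say ``granting $\Uparrow_{r}F\in\tau_{SI_{2}}(X)$'' and only gesture at how that might be established. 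This is a genuine gap, and the proposed route is the wrong one: $\Uparrow_{r}F\in\tau_{SI_{2}}(X)$ is Proposition \ref{proposition-3.2}(1) of the paper, which is \emph{derived from} Theorem \ref{theorem-3.1}(1), so invoking it here would be circular; and, as you yourself note, your set $D$ shows that if interpolation fails then $\Uparrow_{r}F$ is \emph{not} weakly irreducibly open, so the statement you propose to ``grant'' is at least as strong as the one being proved. There is also no evident direct proof of it from the three defining conditions for arbitrary irreducible sets $E$, because the natural argument needs the family $\bigcup_{e\in E}w(e)$ to be directed, which uses directedness of $E$ and fails for general irreducible sets.

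What you actually need is only the directed-set instance of weak irreducible openness, for the particular $D$ at hand, and that is provable. The family $\mathcal{G}=\bigcup_{d\in D}w(d)$ is a directed family of nonempty finite sets: given $G_{i}\in w(d_{i})$ ($i=1,2$), pick $d\in D$ with $d_{1},d_{2}\leq d$; then $G_{1},G_{2}\in w(d)$ by Proposition \ref{proposition-3.1}(4) and $w(d)$ is directed. Moreover $\bigcap_{G\in\mathcal{G}}\uparrow G=\bigcap_{d\in D}\bigcap_{G\in w(d)}\uparrow G=\bigcap_{d\in D}\uparrow d=D^{\uparrow}\subseteq\uparrow x$, using Definition \ref{definition-3.2}(2) at each $d$ and then $x\in D^{\delta}$. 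Lemma \ref{lemma-3.2} applied to $F\ll_{r}x$ and $\mathcal{G}$ now yields some $G\in w(d)$, $d\in D$, with $G\subseteq\uparrow F$; hence $F\leq G\ll_{r}d$, so $F\ll_{r}d$ by Proposition \ref{proposition-3.1}(4), i.e.\ $d\in D\cap\Uparrow_{r}F$, contradicting $D\subseteq X\setminus\Uparrow_{r}F$. Inserting this in place of the ``granted'' step closes your proof of (1); it is in substance the Compendium argument that the paper cites without reproducing.
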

\begin{proof}
(1) The statement has been proved for quasicontinuous domains in [3, Proposition III-3.5], and the similar proof carries over  to this setting.

(2) For all $H\in P^{(<\omega)}$, $F\subseteq X$, if $F\ll_{r} H$, then for all $h\in H$, $F\ll_{r} h$ by
Proposition \ref{proposition-3.1}.
 By (1), there exists a finite set $G_h$ such that $F\ll_{r} G_h\ll_{Z} h$. Let $G=\bigcup\limits_{h\in H}G_h$.
  Then $G\in P^{(<\omega)}$
 and $F\ll_{r} G\ll_{r} H$.

(3) For all $F\in X^{(<\omega)}, E\in Irr(X)$, if $F\ll_{r} E^{\delta}$, then there exist $G, H \in X^{(<\omega)}$
such that
$F\ll_{r} G\ll_{r} H\ll_{r} E^{\delta}$. Hence there are $g\in G, e\in E$ with $g\leq e$. Clearly, $F\ll_{r} g$, and then
 $F\ll_{r} e$.
\end{proof}

\begin{proposition}\label{proposition-3.2}
Let $(X, \tau)$ be an $SI_2$-quasicontinuous space.
\begin{enumerate}
\item[\emph{(1)}] For any $H\in X^{(<\omega)}$, $\Uparrow_{r} H$=$\operatorname{int}_{\tau_{SI_2(X)}}\uparrow\! H$.

\item[\emph{(2)}] A subset $U$ of $X$ is weakly irreducibly open iff for each $x\in U$ there exists a finite $F\ll_{r} x$ such that $\uparrow\! F\subseteq U$.

\item[\emph{(3)}] The sets $\{\Uparrow_{r} F:F\in X^{(<\omega)}\}$ form a basis for $\tau_{SI_2(X)}$.
\end{enumerate}
\end{proposition}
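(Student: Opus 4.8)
The plan is to establish the three parts in order, since (1) provides the tools used in (2), and (2) and (1) together give (3). Throughout, the load-bearing inputs are the interpolation property (Theorem~\ref{theorem-3.1}), the defining identity $\uparrow\! x=\bigcap\{\uparrow\! F:F\in w(x)\}$ together with directedness of $w(x)$, and the fact that open sets are upper sets and directed sets are irreducible (Lemma~\ref{remark-2.1}).

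For (1) I would prove the two inclusions separately. For $\Uparrow_{r}H\subseteq\operatorname{int}_{\tau_{SI_2(X)}}\uparrow\! H$ it suffices to show $\Uparrow_{r}H$ is itself weakly irreducibly open and contained in $\uparrow\! H$; the containment is Proposition~\ref{proposition-3.1}(3), and membership in $\tau$ is condition (3) of $SI_2$-quasicontinuity, so the real content is the irreducibility condition. Given $F\in\mathrm{Irr}(X)$ with $F^{\delta}\cap\Uparrow_{r}H\neq\emptyset$, I would pick $x\in F^{\delta}$ with $H\ll_{r}x$, interpolate via Theorem~\ref{theorem-3.1}(2) to get $G\in X^{(<\omega)}$ with $H\ll_{r}G\ll_{r}x$, apply $G\ll_{r}x$ to the irreducible set $F$ (legitimate since $x\in F^{\delta}$) to obtain $g\in G\cap F$, and conclude $g\in F\cap\Uparrow_{r}H$ because $H\ll_{r}g$ by Proposition~\ref{proposition-3.1}(1). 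For the reverse inclusion, take $x$ in a weakly irreducibly open $U$ with $x\in U\subseteq\uparrow\! H$ and show $H\ll_{r}x$: for irreducible $E$ with $x\in E^{\delta}$, weak irreducible openness forces $E\cap U\neq\emptyset$, producing $e\in E\cap\uparrow\! H$, i.e. $\uparrow\! H\cap E\neq\emptyset$; this gives $\uparrow\! H\ll_{r}x$, which is equivalent to $H\ll_{r}x$ by Proposition~\ref{proposition-3.1}(2) and (4). The careful handling of the ``$\uparrow\! F$ versus $F$'' distinction in the definition of $\ll_{r}$ is exactly what makes this last step work.

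Part (2) is the heart of the argument. The ``if'' direction is routine: if each $x\in U$ admits a finite $F_x\ll_{r}x$ with $\uparrow\! F_x\subseteq U$, then $x\in\Uparrow_{r}F_x\subseteq\uparrow\! F_x\subseteq U$, so $U=\bigcup_{x\in U}\Uparrow_{r}F_x$ is a union of weakly irreducibly open sets by part~(1), hence weakly irreducibly open. The ``only if'' direction is where I expect the main obstacle, and I would argue by contradiction. Suppose $U$ is weakly irreducibly open, $x\in U$, but no $F\in w(x)$ satisfies $\uparrow\! F\subseteq U$. Since $U$ is an upper set, $F\subseteq U$ would force $\uparrow\! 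F\subseteq U$, so in fact $F\setminus U\neq\emptyset$ for every $F\in w(x)$. I would then verify that $\{F\setminus U:F\in w(x)\}$ is a directed family of nonempty finite sets (using directedness of $w(x)$ and that $U$ is upper) and invoke the Rudin Lemma (Lemma~\ref{lemma-2.1}) to extract a directed set $D\subseteq X\setminus U$ meeting every member, so in particular $D\cap F\neq\emptyset$ for all $F\in w(x)$.

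The payoff step is to show $x\in D^{\delta}$: for any upper bound $u$ of $D$ and any $F\in w(x)$, choosing $d\in D\cap F$ gives $d\leq u$ with $d\in F$, so $u\in\uparrow\! F$; hence $u\in\bigcap\{\uparrow\! F:F\in w(x)\}=\uparrow\! x$ by $SI_2$-quasicontinuity, i.e. $x\leq u$. Thus $x$ lies below every upper bound of $D$, so $x\in D^{\delta}$. Since $D$ is directed it is irreducible (Lemma~\ref{remark-2.1}(1)), and $x\in D^{\delta}\cap U$ shows $D^{\delta}\cap U\neq\emptyset$; weak irreducible openness then forces $D\cap U\neq\emptyset$, contradicting $D\subseteq X\setminus U$. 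This Rudin-Lemma construction, and verifying that it lands the chosen upper bound inside $\uparrow\! x$, is the technical crux of the whole proposition. Finally, part~(3) falls out immediately: each $\Uparrow_{r}F$ is weakly irreducibly open by (1), and by (2) every weakly irreducibly open $U$ is the union of those $\Uparrow_{r}F$ with $F\ll_{r}x$ and $\uparrow\! F\subseteq U$ for some $x\in U$, so $\{\Uparrow_{r}F:F\in X^{(<\omega)}\}$ is a basis for $\tau_{SI_2(X)}$.
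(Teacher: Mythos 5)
Your proofs of (1) and (3) match the paper's essentially step for step: (1) is proved by showing $\Uparrow_{r}H$ is itself weakly irreducibly open via the interpolation of Theorem~\ref{theorem-3.1}, and (3) falls out of (1) and (2) exactly as you say. The genuine divergence is in the ``only if'' direction of (2). The paper's argument is much shorter: if $U\in\tau_{SI_{2}}(X)$ and $x\in U$, then directly from the definition of weak irreducible openness one has $U\ll_{r}x$ (any irreducible $E$ with $x\in E^{\delta}$ satisfies $E^{\delta}\cap U\neq\emptyset$, hence $E\cap U\neq\emptyset$), and a single application of Theorem~\ref{theorem-3.1}(1) with the possibly infinite set $U$ as first argument yields a finite $F$ with $U\ll_{r}F\ll_{r}x$, whence $\uparrow\!F\subseteq\uparrow\!U=U$. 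You instead argue by contradiction with the Rudin Lemma applied to the directed family $\{F\setminus U:F\in w(x)\}$, using $\uparrow\!x=\bigcap\{\uparrow\!F:F\in w(x)\}$ to place $x$ in $D^{\delta}$ and then contradicting weak irreducible openness; this is correct (your verifications that the family is directed and that $x\in D^{\delta}$ are exactly the needed points) and is in fact the same Rudin-Lemma computation the paper isolates later as Proposition~\ref{proposition-4.1}. What the paper's route buys is brevity, at the cost of invoking the interpolation theorem for an infinite first argument (justified only by reference to the Compendium); what your route buys is self-containedness --- you unfold that machinery explicitly and rely only on the finite sets in $w(x)$, the Rudin Lemma, and the defining identity for $\uparrow\!x$. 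Both are sound, and your reliance on Proposition~\ref{proposition-3.1}(2) in part (1) to pass from $\uparrow\!H\cap E\neq\emptyset$ to $H\ll_{r}x$ is no worse than the paper's own unproved use of that equivalence.
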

\begin{proof}
(1) Suppose that $H$ is a nonempty set in $X$. It is easy to show that $int_{\tau_{SI_2(X)}}\uparrow\! H\subseteq \Uparrow_{r}\! H$. Conversely, we show that $\Uparrow_{r}\! H\in\tau_{SI_2(X)}$. Obviously, $\Uparrow_{r} H\in\tau$. For all irreducible sets $E\subseteq X$, if $E^\delta\cap\Uparrow_{r}\! H\neq\emptyset$, i.e., there exists $e\in E^\delta\cap\Uparrow_{r}\! H$, then there exists a $F\in X^{(<\omega)}$ such that $H\ll_{r} F\ll_{r} e$ by Theorem \ref{theorem-3.1}. Since $e\in E^\delta$, $\uparrow\! F\cap E\neq\emptyset$, i.e., there exists some $y\in\uparrow F\cap E$. Thus $y\in\Uparrow_{r}\! H\cap E\neq\emptyset$ by Proposition \ref{proposition-3.1}. So $\Uparrow_{r} H\in\tau_{SI_2(X)}$. Since $\Uparrow_{r} H\subseteq\uparrow\! H$, $\Uparrow_{r} H\subseteq int_{\tau_{SI_2(X)}}\uparrow\! H$.

(2) Let $U\in\tau_{SI_2(X)}$, $x\in U$. By the definition of the weakly irreducibly open set, we have $U\ll_{r} x$, so by Theorem \ref{theorem-3.1}, there exists $F\in X^{(<\omega)}$ such that $U\ll_{r} F\ll_{r} x$. Thus $\uparrow\! F\subseteq U$. Conversely, suppose that the condition is satisfied, we have $U=\cup\{\Uparrow_{r} F: F\in X^{(<\omega)}, \uparrow F\subseteq U\}$. Since the space $X$ is $SI_2$-quasicontinuous, so $U\in \tau$. For all irreducible sets $E\subseteq X$, if $U\cap E^\delta\neq\emptyset$, then there exists $y\in U\cap E^\delta$. By hypothesis, there exists a finite $G\ll_{r} y$ such that $\uparrow\! G\subseteq U$. Therefore, $E\cap U\supseteq E\cap\uparrow\! G\neq\emptyset$. Thus $U$ is weakly irreducibly open.

(3) Let $U\in\tau_{SI_2(X)}$. By (2), we have $U=\cup\{\Uparrow_{r} F: F\in X^{(<\omega)}, \uparrow F\subseteq U\}$. By (1), $\Uparrow_{r}\! F\in\tau_{SI_2(X)}$ for all $F\in X^{(<\omega)}$. Thus the sets $\{\Uparrow_{r}\! F:F\in X^{(<\omega)}\}$ form a basis for $\tau_{SI_2(X)}$.
\end{proof}

\begin{definition}(\cite{GierzLawson1981})\label{definition-3.9}
For a complete lattice $L$, define a relation $\prec$ on $L$ by $x\prec y$ $\Leftrightarrow$ $y\in\mbox{int}_{\upsilon(L)}\uparrow\! x$. $L$ is called hypercontinuous if $x=\bigvee\{u\in L :u\prec x\}$ for
all $x\in L$.
\end{definition}

Now we give the topological characterizations of $SI_2$-quasicontinuous spaces.

\begin{theorem}\label{theorem-3.2} For a $T_{0}$ space $X$, the following statements are equivalent:
\begin{enumerate}
\item[\emph{(1)}] $X$ is an $SI_2$-quasicontinuous space;

\item[\emph{(2)}] For all $x\in X$ and $U\in\tau_{SI_{2}}(X)$, $x\in U$ implies that there exists $F\in X^{(<\omega)}$ such that $x\in int_{\tau_{SI_{2}}(X)}\uparrow F\subseteq\uparrow F\subseteq U$;

\item[\emph{(3)}] $(\tau_{SI_{2}}(X)),\subseteq)$ is a hypercontinuous lattice.
\end{enumerate}
\end{theorem}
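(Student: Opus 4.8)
The plan is to prove the equivalence of the three statements by establishing the cycle $(1)\Rightarrow(2)\Rightarrow(3)\Rightarrow(1)$, drawing on the structural results already assembled in Proposition~\ref{proposition-3.2} and the interpolation property of Theorem~\ref{theorem-3.1}. The implication $(1)\Rightarrow(2)$ should be essentially a restatement of what Proposition~\ref{proposition-3.2} already gives us: assuming $X$ is $SI_2$-quasicontinuous and $x\in U$ with $U$ weakly irreducibly open, part~(2) of that proposition produces a finite $F\ll_r x$ with $\uparrow\! F\subseteq U$, and part~(1) identifies $\Uparrow_r F=\operatorname{int}_{\tau_{SI_2(X)}}\uparrow\! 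F$, which contains $x$ precisely because $F\ll_r x$. Chaining these yields $x\in\operatorname{int}_{\tau_{SI_2}(X)}\uparrow\! F\subseteq\uparrow\! F\subseteq U$, so this step is short.

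For $(2)\Rightarrow(3)$, first I would recall that $(\tau_{SI_2}(X),\subseteq)$ is a complete lattice, being the open-set lattice of a topology. The heart of the matter is to connect the set-level approximation in condition~(2) with the lattice-theoretic relation $\prec$ of Definition~\ref{definition-3.9} on this open-set lattice, and then verify hypercontinuity $U=\bigvee\{V\in\tau_{SI_2}(X):V\prec U\}$. The key observation is that the upper topology $\upsilon(\tau_{SI_2}(X))$ has as subbasic closed sets the principal ideals $\downarrow\! W=\{V:V\subseteq W\}$, so that interior in the upper topology of a principal filter $\uparrow\! V$ is controlled by finite intersections of complements of such downsets. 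I would show that for $U\in\tau_{SI_2}(X)$, the sets $\Uparrow_r F$ with $\uparrow\! F\subseteq U$ approximate $U$ in the sense of $\prec$: given $x\in U$, condition~(2) supplies $F$ with $x\in\operatorname{int}_{\tau_{SI_2}}\uparrow\! F\subseteq U$, and one then argues $\Uparrow_r F\prec U$ by exhibiting $U$ as lying in the upper-topology interior of the principal filter above $\Uparrow_r F$. Taking the join over all such $F$ recovers $U$ since the $\Uparrow_r F$ form a basis by Proposition~\ref{proposition-3.2}(3), so every point of $U$ is covered.

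The implication $(3)\Rightarrow(1)$ runs in reverse and is where I expect the main technical friction. Assuming $(\tau_{SI_2}(X),\subseteq)$ is hypercontinuous, I must recover the three defining clauses of $SI_2$-quasicontinuity: directedness of $w(x)$, the equality $\uparrow\! x=\bigcap\{\uparrow\! F:F\in w(x)\}$, and openness of $\Uparrow_r H$. The strategy is to translate the relation $V\prec U$ back into the set world: if $V\prec U$ then $U$ lies in an upper-topology basic open set $\bigcap_{i}(\tau_{SI_2}(X)\setminus\downarrow\! W_i)$ contained in $\uparrow\! V$, and unwinding this finite data should yield a finite subset $F$ of $X$ with $V\subseteq\Uparrow_r F$ and $\uparrow\! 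F$ related to $U$. Hypercontinuity then forces, for each $x\in U$, enough such approximants to witness a finite $F\ll_r x$ with $\uparrow\! F\subseteq U$, which is exactly condition~(2) of Proposition~\ref{proposition-3.2} and hence $SI_2$-quasicontinuity by that proposition.

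The hard part will be the bookkeeping in $(3)\Rightarrow(1)$: extracting a genuinely \emph{finite} set $F\subseteq X$ from the lattice-theoretic relation $\prec$, since the upper topology on $\tau_{SI_2}(X)$ only guarantees finitely many subbasic factors, and one must verify that the resulting $F$ actually satisfies $F\ll_r x$ and that $w(x)$ is directed rather than merely nonempty. I expect the Rudin Lemma (Lemma~\ref{lemma-2.1}) and the interpolation property (Theorem~\ref{theorem-3.1}) to be the decisive tools for establishing directedness and for passing between the set relation $\ll_r$ and the order relation $\prec$ on the open-set lattice.
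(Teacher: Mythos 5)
Your decomposition into the cycle $(1)\Rightarrow(2)\Rightarrow(3)\Rightarrow(1)$ differs from the paper, which proves $(1)\Leftrightarrow(2)$ directly and disposes of $(2)\Leftrightarrow(3)$ by citing Lemma 2.2 of \cite{XY2009}; your intention to establish the hypercontinuity link by hand, via the upper topology on $(\tau_{SI_{2}}(X),\subseteq)$, is legitimate and would make the argument more self-contained. The step $(1)\Rightarrow(2)$ is fine and coincides with the paper's use of Proposition~\ref{proposition-3.2}.

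The genuine gap is in $(3)\Rightarrow(1)$. You close that leg by saying the extracted condition ``is exactly condition (2) of Proposition~\ref{proposition-3.2} and hence $SI_2$-quasicontinuity by that proposition,'' but Proposition~\ref{proposition-3.2} is stated \emph{under the hypothesis} that $X$ is already $SI_2$-quasicontinuous; it characterizes the weakly irreducibly open sets of such a space and asserts no converse. The same objection applies to your plan to lean on the interpolation property (Theorem~\ref{theorem-3.1}) in this leg: it too presupposes $SI_2$-quasicontinuity. So the hardest part of the theorem is discharged by a circular citation. What is actually required is the paper's direct argument for $(2)\Rightarrow(1)$: fix $x$, set $\mathcal{F}=\{F\in X^{(<\omega)}: x\in \operatorname{int}_{\tau_{SI_{2}}(X)}\uparrow\! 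F\}$, show $\mathcal{F}$ is nonempty and directed by applying the interior condition to $X$ itself and to $\operatorname{int}_{\tau_{SI_{2}}(X)}\uparrow\! F_1\cap\operatorname{int}_{\tau_{SI_{2}}(X)}\uparrow\! F_2$, prove $\uparrow\! x=\bigcap_{F\in\mathcal{F}}\uparrow\! F$ using that $X\setminus\downarrow\! y\in\tau_{SI_{2}}(X)$ whenever $x\nleq y$, note that each $F\in\mathcal{F}$ satisfies $F\ll_{r}x$ directly from the definition of weakly irreducibly open sets, and then obtain directedness of $w(x)$ from Lemma~\ref{lemma-3.2}, which (unlike Proposition~\ref{proposition-3.2} and Theorem~\ref{theorem-3.1}) carries no quasicontinuity hypothesis. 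None of this machinery appears in your outline, so as written the proposal does not prove the implication.
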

\begin{proof}
$(1)\Rightarrow(2)$: It follows from Proposition \ref{proposition-3.2}.

$(2)\Rightarrow(1)$: For all $x\in P$, let $\mathcal{F}=\{F\in P^{(<\omega)}: x\in int_{\tau_{SI_{2}}(X)}\uparrow\! F\}$. Since $X\in\tau_{SI_{2}}(X)$, it follows from (2) that there exists $F\in X^{(<\omega)}$ such that $x\in int_{\tau_{SI_{2}}(X)}\uparrow\! F\subseteq\uparrow\! F\subseteq X$. Then $F\in\mathcal{F}\neq\emptyset$.  Obviously $\uparrow\! x\subseteq\bigcap\limits_{F\in\mathcal{F}}\uparrow\! F$. If $x\nleq y$, then $x\in X\mathop{\backslash}\downarrow\! y\in\tau_{SI_{2}}(X)$. By (2), there exists $F\in X^{(<\omega)}$ such that $x\in int_{\tau_{SI_{2}}(X)}\uparrow\! F\subseteq\uparrow\! F\subseteq X\mathop{\backslash}\downarrow\! y$. Then $F\in\mathcal{F}$ and $y\notin\uparrow\! F$. Thus $\uparrow\! x=\bigcap\limits_{F\in\mathcal{F}}\uparrow\! F$. Now we show that $\mathcal{F}$ is directed. Let $F_1$, $F_2\in\mathcal{F}$, we have $x\in int_{\tau_{SI_{2}}(X)}\uparrow\! F_1\cap int_{\tau_{SI_{2}}(X)}\uparrow\! F_2\in\tau_{SI_{2}}(X)$. Again from (2), there exists $F\in X^{(<\omega)}$ such that $x\in int_{\tau_{SI_{2}}(X)}\uparrow\! F\subseteq\uparrow\! F\subseteq int_{\tau_{SI_{2}}(X)}\uparrow\! F_1\cap int_{\tau_{SI_{2}}(X)}\uparrow\! F_2\subseteq \uparrow\! F_1\cap \uparrow\! F_2$. Thus we have that $\mathcal{F}$ is directed and $F\in \mathcal{F}$. By the definition of $\tau_{SI_{2}}(X)$, $F\ll_{r} x$ for all $F\in \mathcal{F}$, and then $\mathcal{F}\subseteq w(x)$. By Lemma \ref{lemma-3.2}, it is not difficult to show that $w(x)$ is directed. So $\uparrow\! x\subseteq\bigcap\limits_{F\in w(x)}\uparrow\! F\subseteq\bigcap\limits_{F\in\mathcal{F}}\uparrow\! F=\uparrow\! x$.

$(2)\Leftrightarrow (3)$: It follows from Lemma 2.2 of \cite{XY2009}.
\end{proof}

\section{$\mathcal{GD}$-convergence in $SI_2$-quasicontinuous spaces }

In this section, the concept of $\mathcal{GD}$-convergence in a poset is introduced. It is proved that the $T_{0}$ space $X$ is $SI_2$-quasicontinuous if and only if the $\mathcal{GD}$-convergence in $X$ is topological.
\begin{definition}\label{definition-4.1}
Let $X$ be a $T_{0}$ space and $(x_{j})_{j\in J}$ a net in $X$. $F\subseteq X$ is called a quasi-eventual lower bound of a net $(x_{j})_{j\in J}$ in $P$, if $F$ is finite and there exists $k\in J$ such that $x_{j}\in \uparrow F$ for all $j\geq k$ with respect to the specialization order.
\end{definition}
\begin{definition}\label{definition-4.2}
Let $X$ be a $T_{0}$ space and $(x_{j})_{j\in J}$ a net. We say $x\in X$ is a $\mathcal{GD}$-limit of the net $(x_{j})_{j\in J}$ if there exists a directed family $\mathcal{F}=\{F\subseteq X: F$ is finite\} of quasi-eventual lower bounds of the net $(x_{j})_{j\in J}$ in $X$ with respect to the specialization order such that $\bigcap_{F\in \mathcal{F}}\uparrow F\subseteq \uparrow x$. In this case we write $x\equiv_{\mathcal{GD}}$lim $x_{j}$.
\end{definition}
\begin{lemma}\label{lemma-4.3}
Let $X$ be a $T_{0}$ space and $(x_{j})_{j\in J}$ a net in $X$. If $x\equiv_{\mathcal{D}}$lim $x_{j}$ then $x\equiv_{\mathcal{GD}}$lim $x_{j}$.
\end{lemma}
\begin{proof}
 Let $X$ be a $T_{0}$ space and $(x_{i})_{i\in I}$ a net with $x\equiv_{\mathcal{D}}$lim $x_{j}$. Then there is a directed set $D$ of an eventual lower bounds of a net $(x_{j})_{j\in J}$ with $x\in D^{\delta}$. Let $\mathcal{F}=\{\{d\}: d\in D\}$, then $\mathcal{F}$ is a directed family of a quasi-eventual lower bounds of a net $(x_{j})_{j\in J}$ and $D^{\uparrow}=\bigcap\{\uparrow d: d\in D\}\subseteq \uparrow x$. Thus $x\equiv_{\mathcal{GD}}$lim $x_{j}$.
\end{proof}
\begin{remark}\label{remark-4.4}
Let $X$ be a $T_{0}$ space and $(x_{j})_{j\in J}$ a net in $X$. If $x\equiv_{\mathcal{GD}}$lim $x_{j}$ then we may not have $x\equiv_{\mathcal{D}}$lim $x_{j}$.
\end{remark}

\begin{example}\label{example-4.5}
Let $P=\mathbb{N}\cup \{\top, z\}$, where $\mathbb{N}$ denotes the set of all natural numbers. The order $\leq$ on $P$ is defined as follows \rm(see Figure 1):
\begin{enumerate}
\item[{\rm(1)}] $\forall x\in P, x\leq \top$;
\item[{\rm(2)}] $\forall x, y\in \mathbb{N}, x\leq y $ if $x$ is less than or equal to $y$ according to the usual order on natural numbers.
\end{enumerate}
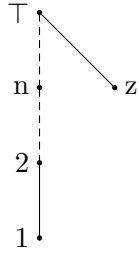
\begin{figure}[H]
\begin{center}
\begin{tikzpicture}[scale=1.0]
\path (0,1)   node[left] {$1$} coordinate (a);
\fill (a) circle (1pt);
\path (0,2)   node[left] {2} coordinate (b);
\fill (b) circle (1pt);
\path (0,3)   node[left] {n} coordinate (c);
\fill (c) circle (1pt);
\path (1,3)   node[right] {z} coordinate (d);
\fill (d) circle (1pt);
\path (0,4)  node[left] {$\top$} coordinate (e);
\fill (e) circle (1pt);
\draw  (a) -- (b)   (d) -- (e);
\draw[densely dashed] (c)--(e) (b) -- (c);
\end{tikzpicture}
\caption{A $\mathcal{GD}$-limit of a net for which is not a $\mathcal{D}$-limit.}\label{Fig:1}
\end{center}
\end{figure}

Then $(P, \alpha(P))$ is $SI_2$-quasicontinuous but not $SI_2$-continuous. Also for all $n\in \mathbb{N}, \{z, n\}\ll z$ and $\uparrow z=\bigcap_{n\in \mathbb{N}}\uparrow\{z, n\}$. Let $x_{2n}=n, x_{2n+1}=z$, then $(x_{j})_{j\in \mathbb{N}}$ is a net and $\{z, n\}$ is a quasi-eventual lower bound of it. Hence $z\equiv_{\mathcal{GD}}$lim $x_{n}$. It is not difficult to check that $z\leq x_{n}$ does not hold eventually. Thus $z$ is not a $\mathcal{D}$-limit of the net $(x_{n})_{n\in \mathbb{N}}$.
\end{example}

\begin{proposition}\label{proposition-4.1}
Let $\mathcal{F}$ be a directed family of nonempty finite sets in a $T_{0}$ space $X$. If $x\in U\in\tau_{SI_{2}}(X)$ and $\bigcap_{F\in \mathcal{F}}\uparrow F\subseteq \uparrow x$, then $F\subseteq U$ for some $F\in \mathcal{F}$.
\end{proposition}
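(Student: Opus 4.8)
The plan is to argue by contradiction using the Rudin Lemma. Suppose that $F\not\subseteq U$ for every $F\in\mathcal{F}$. Since $U$ is open it is an upper set (Lemma \ref{remark-2.1}), so $X\setminus U$ is a lower set, and $F\not\subseteq U$ is equivalent to $F\cap(X\setminus U)\neq\emptyset$. Hence I would form the family $\mathcal{G}=\{F\cap(X\setminus U):F\in\mathcal{F}\}$ of nonempty finite sets and transfer the problem to it.

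First I would check that $\mathcal{G}$ is directed. Given $F_{1},F_{2}\in\mathcal{F}$, directedness of $\mathcal{F}$ yields $F_{3}\in\mathcal{F}$ with $F_{3}\subseteq\uparrow\! F_{1}\cap\uparrow\! F_{2}$. For any $y\in F_{3}\cap(X\setminus U)$ there are $a_{1}\in F_{1}$ and $a_{2}\in F_{2}$ with $a_{1}\leq y$ and $a_{2}\leq y$; since $X\setminus U$ is a lower set and $y\in X\setminus U$, each $a_{i}$ lies in $F_{i}\cap(X\setminus U)$, so $y\in\uparrow\!(F_{1}\cap(X\setminus U))\cap\uparrow\!(F_{2}\cap(X\setminus U))$. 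This shows $\mathcal{G}$ is directed. Next I would apply the Rudin Lemma (Lemma \ref{lemma-2.1}) to $\mathcal{G}$ to obtain a directed set $D\subseteq\bigcup_{G\in\mathcal{G}}G\subseteq X\setminus U$ with $D\cap G\neq\emptyset$ for every $G\in\mathcal{G}$. By Lemma \ref{remark-2.1}, $D$ is irreducible, and since $D\subseteq X\setminus U$ we have $D\cap U=\emptyset$. As $U$ is weakly irreducibly open, the contrapositive of Definition \ref{definition-2.4}(2) gives $D^{\delta}\cap U=\emptyset$.

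The crux is then to show $x\in D^{\delta}$, which will contradict $x\in U$. To this end I would take an arbitrary upper bound $u$ of $D$. For each $F\in\mathcal{F}$ we have $D\cap(F\cap(X\setminus U))\neq\emptyset$, so some $d\in D$ lies in $F$; as $d\leq u$ this gives $u\in\uparrow\! F$. Hence $u\in\bigcap_{F\in\mathcal{F}}\uparrow\! F\subseteq\uparrow\! x$, i.e.\ $x\leq u$. Thus every upper bound of $D$ lies above $x$, that is $x\in(D^{\uparrow})^{\downarrow}=D^{\delta}$. Then $x\in D^{\delta}\cap U$, contradicting $D^{\delta}\cap U=\emptyset$; so the assumption fails and $F\subseteq U$ for some $F\in\mathcal{F}$.

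The main obstacle I anticipate is the verification that the truncated family $\mathcal{G}$ is directed: everything hinges on exploiting that $X\setminus U$ is a lower set, so that a witness dropping below a point of $X\setminus U$ again lands outside $U$. The second delicate point is recognizing that weak irreducibility must be invoked through its contrapositive form, applied precisely to the irreducible directed set produced by the Rudin Lemma; the final computation that $x\in D^{\delta}$ is then a routine upper-bound argument.
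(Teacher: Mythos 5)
Your proof is correct and follows essentially the same route as the paper's: assume no $F$ is contained in $U$, apply the Rudin Lemma to the truncated family $\{F\setminus U : F\in\mathcal{F}\}$, show the resulting directed set $D$ satisfies $x\in D^{\delta}$ via the upper-bound computation, and derive a contradiction from weak irreducibility of $U$ (you phrase this contrapositively, the paper directly; the two are equivalent). The only difference is that you explicitly verify the directedness of the truncated family using that $X\setminus U$ is a lower set, a step the paper merely asserts.
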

\begin{proof}
 Suppose not, then the collection $\{F\backslash U: F\in \mathcal{F}\}$ is a directed family of nonempty finite sets.  By Lemma \ref{lem:Rudin}, there is some directed set $D\subseteq\bigcup\{F\backslash U: F\in \mathcal{F}\}$ such that $D\cap (F\backslash U)\neq\emptyset$ for all $F\in \mathcal{F}$. Then $D^{\uparrow}=\bigcap_{d\in D}\uparrow d\subseteq\bigcap_{F\in \mathcal{F}}\uparrow(F\backslash U)\subseteq\bigcap_{F\in \mathcal{F}}\uparrow F\subseteq\uparrow x$. Thus $x\in (D^{\uparrow})^{\downarrow}=D^{\delta}$. Now we have $x\in D^{\delta}\cap U\neq\emptyset$, and hence $D\cap U\neq\emptyset$ by the definition of the weakly irreducibly open set. This implies $d\in U$ for some $d\in D$, which contradicts the fact that $d$ belongs to some $F\backslash U$ which is disjoint from $U$.
\end{proof}

Now we derive the $\mathcal{GD}$-convergence in the  $SI_{2}$-quasicontinuous space is topological.

\begin{proposition}\label{proposition-4.7}
Let $X$ be an $SI_{2}$-quasicontinuous space. Then $x\equiv_{\mathcal{GD}}$lim $x_{j}$ if and only if the net $(x_{j})_{j\in J}$ converges to the element $x$ with respect to the topology $\tau_{SI_{2}}(X)$. That is, the $\mathcal{GD}$-convergence is topological.
\end{proposition}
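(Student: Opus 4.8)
The plan is to prove the two implications separately, leaning entirely on the neighborhood-basis results (Proposition \ref{proposition-3.2}) and the Rudin-type selection lemma (Proposition \ref{proposition-4.1}) already in hand, so that quasicontinuity enters only where it is genuinely needed. For the direction $x\equiv_{\mathcal{GD}}\lim x_{j}\Rightarrow$ topological convergence, I would start from a directed family $\mathcal{F}$ of quasi-eventual lower bounds witnessing the $\mathcal{GD}$-limit, so that $\bigcap_{F\in\mathcal{F}}\uparrow\! F\subseteq\uparrow\! x$. Given any $U\in\tau_{SI_{2}}(X)$ with $x\in U$, Proposition \ref{proposition-4.1} applies verbatim and produces some $F\in\mathcal{F}$ with $F\subseteq U$. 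Since $U\in\tau_{SI_{2}}(X)\subseteq\tau$ is open, it is an upper set by Lemma \ref{remark-2.1}(2), whence $\uparrow\! F\subseteq U$; as $F$ is a quasi-eventual lower bound there is $k\in J$ with $x_{j}\in\uparrow\! F\subseteq U$ for all $j\geq k$. This is precisely $\tau_{SI_{2}}(X)$-convergence of $(x_{j})$ to $x$, and it uses only that $X$ is $T_{0}$.

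For the converse, suppose $(x_{j})$ converges to $x$ in $\tau_{SI_{2}}(X)$; this is where $SI_{2}$-quasicontinuity is used. I would take the witnessing family to be $w(x)=\{F\in X^{(<\omega)}:F\ll_{r}x\}$ itself. By Definition \ref{definition-3.2}, $w(x)$ is directed and $\uparrow\! x=\bigcap_{F\in w(x)}\uparrow\! F$, so the inclusion $\bigcap_{F\in w(x)}\uparrow\! F\subseteq\uparrow\! x$ demanded by Definition \ref{definition-4.2} holds at once, and it remains only to verify that each $F\in w(x)$ is a quasi-eventual lower bound. The key observation is that $F\ll_{r}x$ is exactly the statement $x\in\Uparrow_{r}\! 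F$, and by Proposition \ref{proposition-3.2}(1) one has $\Uparrow_{r}\! F=\operatorname{int}_{\tau_{SI_{2}}(X)}\uparrow\! F\in\tau_{SI_{2}}(X)$. Thus $\Uparrow_{r}\! F$ is a $\tau_{SI_{2}}(X)$-open neighborhood of $x$, so topological convergence yields $k\in J$ with $x_{j}\in\Uparrow_{r}\! F\subseteq\uparrow\! F$ for all $j\geq k$; hence $F$ is a quasi-eventual lower bound. Therefore $w(x)$ is the required directed family and $x\equiv_{\mathcal{GD}}\lim x_{j}$.

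The argument is short because the heavy machinery is already developed: the interpolation property (Theorem \ref{theorem-3.1}) supports Proposition \ref{proposition-3.2}(1), which supplies the open neighborhoods $\Uparrow_{r}\! F$, while the Rudin Lemma (Lemma \ref{lemma-2.1}) supports Proposition \ref{proposition-4.1}. I expect the only delicate point to be the converse, where one must simultaneously recognize $w(x)$ as directed (by quasicontinuity), as a family of finite quasi-eventual lower bounds (via openness of $\Uparrow_{r}\! F$ together with topological convergence), and as a family whose intersection of up-sets is exactly $\uparrow\! x$. No serious obstacle is anticipated beyond correctly reconciling the directedness convention for families of sets used in Definition \ref{definition-3.2} with that in Definition \ref{definition-4.2}.
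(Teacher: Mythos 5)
Your proposal is correct and follows essentially the same route as the paper's own proof: the forward direction via Proposition \ref{proposition-4.1} to extract some $F\in\mathcal{F}$ inside a given weakly irreducibly open neighborhood, and the converse by taking $w(x)$ as the witnessing directed family and using $\Uparrow_{r}F\in\tau_{SI_{2}}(X)$ from Proposition \ref{proposition-3.2} to see that each $F\ll_{r}x$ is a quasi-eventual lower bound. Your explicit remark that the forward direction needs only $T_{0}$ (not quasicontinuity) is a small clarification the paper leaves implicit, but the argument is the same.
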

\begin{proof}
Let $X$ be $SI_{2}$-quasicontinuous. Suppose first that $x\equiv_{\mathcal{GD}}$lim $x_{j}$ and $x\in U\in\tau_{SI_{2}}(X)$. Then there exists a directed family $\mathcal{F}=\{F\subseteq X: F$ is finite\} of a quasi-eventual lower bounds of the net $(x_{j})_{j\in J}$ in $X$ with respect to the specialization order such that $\bigcap_{F\in \mathcal{F}}\uparrow F\subseteq \uparrow x$. By Proposition \ref{proposition-4.1}, there exists  some $F\in \mathcal{F}$ such that $F\subseteq U$. Thus there exists $j_{0}\in J$ such that $x_{j}\in \uparrow F\subseteq U$ for all $j\geq j_{0}$. This shows that the net $(x_{j})_{j\in J}$ converges to the element $x$ with respect to the topology $\tau_{SI_{2}}(X)$. Conversely, we suppose that the net $(x_{j})_{j\in J}$ converges to an element $x$ with respect to the topology $\tau_{SI_{2}}(X)$. Since $X$ is $SI_2$-quasicontinuous, there exists a directed family $w(x)=\{F\subseteq X:F\in X^{(<\omega)}$ and $F\ll_{r} x\}$ and $\uparrow\! x=\bigcap\{\uparrow\! F:F\in w(x)$ \}. For all $F\in w(x)$, let $U_{F}=\{y\in X: F\ll_{r} y\}$. Then $x\in U_{F}$ and $U_{F}\in \tau_{SI_{2}}(X)$ by Proposition \ref{proposition-3.2}, and hence $x_{j}\in U_{F}$ eventually holds, that is to say, there exists $j_{0}\in J$ such that $F\ll_{r} x_{j}$ for all $j\geq j_{0}$ which implies $x_{j}\in\uparrow F$. Moreover since $w(x)$ is directed, and then one has $x\equiv_{\mathcal{GD}}$lim $x_{j}$.
\end{proof}
The converse is also true.
\begin{proposition}\label{proposition-4.8}
Let $(X, \tau)$ be a $T_{0}$ space. If the $\mathcal{GD}$-convergence with respect to the topology $\tau_{SI_{2}}(X)$ is topological, then $X$ is SI$_{2}$-quasicontinuous.
\end{proposition}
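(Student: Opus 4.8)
The plan is to reduce the statement to the internal characterization of $SI_2$-quasicontinuity recorded in Theorem \ref{theorem-3.2}. Concretely, it suffices to verify condition (2) there: for every $x\in X$ and every $U\in\tau_{SI_2}(X)$ with $x\in U$ there is a finite set $F$ with $x\in \mathrm{int}_{\tau_{SI_2}(X)}\uparrow\! F\subseteq\uparrow\! F\subseteq U$. Once this is established, the implication $(2)\Rightarrow(1)$ of Theorem \ref{theorem-3.2} immediately yields that $X$ is $SI_2$-quasicontinuous. So I would fix such an $x$ and $U$ and work toward producing the approximating finite set $F$.

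The heart of the argument is to manufacture a net that converges to $x$ in $\tau_{SI_2}(X)$ for purely formal reasons, and then let the ``topological'' hypothesis convert this into $\mathcal{GD}$-convergence. First I would take as index set $J=\{(V,a): V\in\tau_{SI_2}(X),\ x\in V,\ a\in V\}$, directed by $(V,a)\le (W,b)\iff W\subseteq V$ (directedness follows by intersecting two neighbourhoods of $x$ and choosing any point of the intersection), and define the net $\psi(V,a)=a$. This net converges to $x$ in $\tau_{SI_2}(X)$: given any open $W\ni x$, fix $a_{0}\in W$; then $\psi(V,a)=a\in V\subseteq W$ for all $(V,a)\ge (W,a_{0})$. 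Since by hypothesis $\mathcal{GD}$-convergence coincides with $\tau_{SI_2}(X)$-convergence, I conclude $x\equiv_{\mathcal{GD}}\lim\psi$, so there is a directed family $\mathcal{F}$ of finite quasi-eventual lower bounds of $\psi$ with $\bigcap_{F\in\mathcal{F}}\uparrow\! F\subseteq\uparrow\! x$.

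Next I would feed $\mathcal{F}$, $x$ and $U$ into Proposition \ref{proposition-4.1}: since $x\in U\in\tau_{SI_2}(X)$ and $\bigcap_{F\in\mathcal{F}}\uparrow\! F\subseteq\uparrow\! x$, there is some $F_{0}\in\mathcal{F}$ with $F_{0}\subseteq U$, whence $\uparrow\! F_{0}\subseteq U$ because $U$ is an upper set (Lemma \ref{remark-2.1}(2)). Finally I would exploit that $F_{0}$ is a quasi-eventual lower bound of $\psi$: there is an index $(V_{0},a_{0})$ with $\psi(V,a)\in\uparrow\! F_{0}$ for all $(V,a)\ge (V_{0},a_{0})$. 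Since every $(V_{0},a)$ with $a\in V_{0}$ dominates $(V_{0},a_{0})$, it follows that $a=\psi(V_{0},a)\in\uparrow\! F_{0}$ for all $a\in V_{0}$, i.e.\ $x\in V_{0}\subseteq\uparrow\! F_{0}$ with $V_{0}$ open. Hence $x\in \mathrm{int}_{\tau_{SI_2}(X)}\uparrow\! F_{0}\subseteq\uparrow\! F_{0}\subseteq U$, which is exactly Theorem \ref{theorem-3.2}(2), completing the reduction.

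I expect the main obstacle to be twofold. The conceptual point is pinning down the meaning of ``topological'': the only consequence I actually use is that a net converging to $x$ in $\tau_{SI_2}(X)$ must be a $\mathcal{GD}$-limit of $x$, so I would make sure this direction is genuinely licensed by the hypothesis (the reverse direction being the content of Proposition \ref{proposition-4.7}, which guarantees there is no clash). The technical point is the bookkeeping around the canonical net $\psi$ — the directedness of $J$, the formal convergence $\psi\to x$, and above all the step that upgrades the quasi-eventual-lower-bound property of $F_{0}$ into an entire open neighbourhood $V_{0}\subseteq\uparrow\! F_{0}$. This last extraction, which is precisely what supplies the interior point demanded by Theorem \ref{theorem-3.2}(2), is where the preorder structure of $J$ (indices sharing the same open set being mutually cofinal) does the essential work.
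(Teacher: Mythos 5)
Your proof is correct, but it takes a genuinely different and more economical route than the paper's. The paper verifies Definition \ref{definition-3.2} directly: it builds a net indexed by $\{(U,n,a)\in N(x)\times\mathbb{N}\times X: a\in U\}$, extracts a directed family $\mathcal{F}$ of quasi-eventual lower bounds, proves by hand that each $F\in\mathcal{F}$ satisfies $F\ll_{r}x$ and that $\bigcap_{F\in\mathcal{F}}\uparrow\! F=\uparrow\! x$, and then spends the entire second half of the argument constructing a \emph{second} net around an arbitrary point $y\in\Uparrow_{r}H$ (together with Theorem \ref{theorem-3.1} and Lemma \ref{lemma-3.2}) to show that $\Uparrow_{r}H\in\tau$ for every finite $H$. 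You instead verify the neighbourhood-basis condition (2) of Theorem \ref{theorem-3.2} --- using your canonical net, the hypothesis, and Proposition \ref{proposition-4.1} to obtain $F_{0}\subseteq U$, and then the mutual cofinality of the indices $(V_{0},a)$, $a\in V_{0}$, to upgrade the quasi-eventual lower bound into an open set $V_{0}\subseteq\uparrow\! F_{0}$ --- and delegate the rest to the implication $(2)\Rightarrow(1)$ of that theorem. The individual steps all check out: your index set is a directed preorder (which suffices for a net), the formal convergence to $x$ is right, the direction of ``topological'' you invoke is the one licensed by the hypothesis, and $\uparrow\! F_{0}\subseteq U$ because weakly irreducibly open sets are upper sets. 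What your approach buys is a single net construction and a visibly shorter argument; what it costs is that the burden of condition (3) of Definition \ref{definition-3.2} (openness of $\Uparrow_{r}H$ in $\tau$) is entirely absorbed into the citation of Theorem \ref{theorem-3.2}, whose printed proof of $(2)\Rightarrow(1)$ only explicitly establishes conditions (1) and (2) of that definition. So your argument is complete exactly to the extent that Theorem \ref{theorem-3.2} is taken at face value; the paper's longer proof of this proposition can be read as supplying, in situ, the verification of condition (3) that a careful reader might otherwise demand at that earlier point.
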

\begin{proof}
Suppose that the $\mathcal{GD}$-convergence with respect to the topology $\tau_{SI_{2}}(X)$ is topological.
For any $x\in X$, let $J=\{(U, n, a)\in N(x)\times\mathbb{N}\times X: a\in U\}$, where $N(x)$ consists of all weakly irreducibly open sets which contain $x$, and define an order on $J$ as follows: $(U, m, a)\leq (V, n, b)$ if and only if $V$ is proper subset of $U$ or $U=V$ and $m\leq n$. Obviously, $J$ is directed. Let $x_{j}=a$ for all $j=(U, m, a)\in J$. Then it is not difficult to check that the net $(x_{j})_{j\in J}$ converges to the element $x$ with respect to the weakly irreducible topology $\tau_{SI_{2}}(X)$, and hence $x\equiv_{\mathcal{GD}}$lim $x_{j}$. Thus it concludes that there is a directed family $\mathcal{F}=\{ F\subseteq X: F$ is finite\} of a quasi-eventual lower bounds of the net $(x_{j})_{j\in J}$ in $X$ with respect to the specialization order such that $\bigcap_{F\in \mathcal{F}}\uparrow F\subseteq \uparrow x$. Now we prove that (1) for all $F\in \mathcal{F}, F\ll_{r} x$; $(2) \bigcap_{F\in \mathcal{F}}\uparrow F= \uparrow x$.

(1) Let $E\subseteq X$ be irreducible with $x\in E^{\delta}$. Since $F$ is a quasi-eventual lower bound of the net $(x_{j})_{j\in J}$, there is $j_{0}=(U, m, a)\in J$ such that $x_{j}\in \uparrow F$ for all $j=(V, n, b)\geq j_{0}$. Notice $x\in U$, so $E^{\delta}\cap U\neq\emptyset$, and then $E\cap U\neq\emptyset$. Pick $e\in E\cap U$. Set $i=(U, m+1, e)$, then $i\geq (U, m, a)=j_{0}$. Thus $e=x_{i}\in \uparrow F$, that is, $F\ll_{r} x$.

(2) We only need to show that $\uparrow x\subseteq\bigcap_{F\in \mathcal{F}}\uparrow F$. Suppose not, then there exists $y\geq x$ but $y\notin\bigcap_{F\in \mathcal{F}}\uparrow F$, that is, there exists $F\in \mathcal{F}$ with $y\notin \uparrow F$. And then $\uparrow F\subseteq X\backslash \downarrow x$. Again since $F$ is a quasi-eventual lower bound of the net $(x_{j})_{j\in J}$, there exists $j_{0}=(U, m, a)\in J$ such that $x_{j}\in \uparrow F$ for all $j=(V, n, b)\geq j_{0}$. Now we have $x\in U$. Set $i=(U, m+1, x)$, then $i\geq (U, m, x)=j_{0}$. Thus $x=x_{i}\in \uparrow F\subseteq X\backslash \downarrow x$, a contradiction.
Finally, it claims that $\Uparrow_{r} H\in\tau$ for all nonempty finite sets $H\subseteq X$. Indeed, if $y\in\Uparrow_{r} H$, then there exists a nonempty finite set $K\subseteq X$ such that $H\ll_{r} K\ll_{r} y$ by Theorem \ref{theorem-3.1}. From the above argument, as long as we replace $J=\{(U, n, a)\in N(x)\times\mathbb{N}\times X: a\in U\}$ with $I=\{(U, n, a)\in N(y)\times\mathbb{N}\times X: a\in U\}$, where $N(y)$ consists of all weakly irreducibly open sets containing $y$,  similarly we have that $(x_{i})_{i\in I}$ converges to $y$ with respect to the topology $\tau_{SI_{2}}(X)$, and then there exists directed family $\mathcal{G}=\{ G\subseteq X: G$ is finite\} of a quasi-eventual lower bounds of the net $(x_{j})_{j\in J}$ in $X$ with respect to the specialization order such that $\bigcap_{G\in \mathcal{G}}\uparrow G\subseteq \uparrow y$. Note that $K\ll_{r} y$. By Lemma \ref{lemma-3.2}, there exists $G\in \mathcal{G}$ such that $G\subseteq\uparrow K$. By the convergence of the net, there exists $i_{0}=(U_{1}, m, a)$ such that $x_{i}=b\in\uparrow G$ for all $i\geq i_{0}$, where $U_{1}\in \tau_{SI_{2}}(X)$. In particular, we have $(U_{1}, m+1, u)\geq (U_{1}, m, a)=i_{0}$ for all $u\in U_{1}$. We conclude that $u\in \uparrow G$, and hence $U_{1}\subseteq\uparrow G$. Next we show that $U_{1}\subseteq\Uparrow_{r} H$. Let $u\in U_{1}$ and $E\in Irr(X)$ with $u\in E^{\delta}\cap U_{1}$. Since $U_{1}$ is weakly irreducible open, then $E\cap U_{1}\neq\emptyset$. Pick $e\in E\cap U_{1}$. And then we have $e\in E\cap U_{1}\subseteq E\cap\uparrow G\subseteq E\cap\uparrow K\subseteq E\cap\uparrow H$. Thus we have $H\ll_{r} u$, and hence we have $y\in U_{1}\subseteq\Uparrow_{r} H$. This shows that $\Uparrow_{r} H\in\tau$. Hence $X$ is $SI_{2}$-quasicontinuous.
\end{proof}
From Proposition \ref{proposition-4.7} and Proposition \ref{proposition-4.8} we have:

\begin{theorem}\label{theorem:-4.9}
Let $X$ be a $T_{0}$ space. Then the following statements are equivalent:
\begin{enumerate}
\item[\rm(1)] $SI_{2}$-quasicontinuous space;

\item[\rm(2)] The $\mathcal{GD}$-convergence with respect to the topology $\tau_{SI_{2}}(X)$ is topological, that is, for all $x\in X$ and all nets $(x_{j})_{j\in J}$ in $X$, $x\equiv_{\mathcal{GD}}$lim $x_{j}$ if and only if $(x_{j})_{j\in J}$ converges to $x$ with respect to the weakly irreducible topology.
\end{enumerate}
\end{theorem}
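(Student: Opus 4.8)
The plan is to obtain Theorem~\ref{theorem:-4.9} directly as the conjunction of the two propositions just established, once one unwinds what it means for a convergence to be \emph{topological}. A net-convergence relation is topological precisely when there is a single topology on $X$ whose associated net-convergence agrees with it; the natural candidate here is the weakly irreducible topology $\tau_{SI_{2}}(X)$. So the first thing I would do is fix $\tau_{SI_{2}}(X)$ as the candidate topology and reduce the biconditional to two transfers of information: from the order-theoretic $SI_{2}$-quasicontinuity to the coincidence of $\mathcal{GD}$-limits with $\tau_{SI_{2}}(X)$-limits, and back from that coincidence to the order structure.

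For the implication $(1)\Rightarrow(2)$ I would simply invoke Proposition~\ref{proposition-4.7}. Assuming $X$ is $SI_{2}$-quasicontinuous, that proposition already proves the biconditional ``$x\equiv_{\mathcal{GD}}\lim x_{j}$ if and only if $(x_{j})_{j\in J}$ converges to $x$ in $\tau_{SI_{2}}(X)$'' for every net and every point, which is exactly the statement that $\mathcal{GD}$-convergence is topological (with $\tau_{SI_{2}}(X)$ as the realizing topology). No additional work is required in this direction. For the converse $(2)\Rightarrow(1)$ I would invoke Proposition~\ref{proposition-4.8}, which starts from the hypothesis that $\mathcal{GD}$-convergence relative to $\tau_{SI_{2}}(X)$ is topological and recovers all three defining clauses of $SI_{2}$-quasicontinuity. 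Thus the proof of the theorem itself is a one-line assembly of the two propositions.

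The substantive obstacle is entirely contained in Proposition~\ref{proposition-4.8}, and that is where I would focus if proving from scratch. The key idea there is the construction of a suitable net witnessing convergence to a fixed $x$: one indexes by $J=\{(U,m,a): a\in U,\ U\in N(x),\ m\in\mathbb{N}\}$, where $N(x)$ is the family of weakly irreducibly open sets containing $x$, ordered by shrinking the open set (with $\mathbb{N}$ breaking ties), and sets $x_{(U,m,a)}=a$. This net converges to $x$ in $\tau_{SI_{2}}(X)$ essentially by design, so the topological hypothesis forces $x\equiv_{\mathcal{GD}}\lim x_{j}$, producing a directed family $\mathcal{F}$ of quasi-eventual lower bounds with $\bigcap_{F\in\mathcal{F}}\uparrow\! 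F\subseteq\uparrow\! x$. The delicate verifications are then: (i) that each $F\in\mathcal{F}$ satisfies $F\ll_{r}x$, which one extracts by feeding an arbitrary irreducible $E$ with $x\in E^{\delta}$ into the net and using the index $(U,m+1,e)$ for a chosen $e\in E\cap U$; (ii) that the inclusion above is in fact an equality $\bigcap\uparrow\! F=\uparrow\! x$; and (iii) the openness $\Uparrow_{r}H\in\tau$ for all finite $H$, which is the hardest clause and relies on the interpolation property of Theorem~\ref{theorem-3.1} together with Lemma~\ref{lemma-3.2}. I expect clause (iii) to be the main technical hurdle, since it requires re-running the net construction at a second point $y\in\Uparrow_{r}H$, interpolating $H\ll_{r}K\ll_{r}y$, and chasing a weakly irreducibly open neighbourhood $U_{1}$ of $y$ through the chain $U_{1}\subseteq\uparrow\! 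G\subseteq\uparrow\! K\subseteq\uparrow\! H$ to conclude $U_{1}\subseteq\Uparrow_{r}H$. Once these are in place, combining Propositions~\ref{proposition-4.7} and~\ref{proposition-4.8} yields the stated equivalence.
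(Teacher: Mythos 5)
Your proposal matches the paper exactly: the theorem is obtained there as an immediate corollary of Propositions~\ref{proposition-4.7} and~\ref{proposition-4.8}, with (1)$\Rightarrow$(2) from the former and (2)$\Rightarrow$(1) from the latter, and your sketch of the net construction and the three verifications inside Proposition~\ref{proposition-4.8} follows the paper's own argument. No gaps; this is the same route.
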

\begin{corollary}[\cite{ZL2013}]
Let $P$ be a dcpo. Then the following conditions are equivalent:
\begin{enumerate}
\item[\rm(1)] $P$ is a quasicontinuous domain;
\item[\rm(2)] $S^{\ast}$-convergence in $P$ is topological for the Scott topology $\sigma(P)$, that is, for all $x\in P$ and all nets $(x_{j})_{j\in J}$ in $P$, $(x_{j})_{j\in J}$ $S^{\ast}$-converges to $x$ if and only if $(x_{j})_{j\in J}$ converges to $x$ with respect to the Scott topology.

\end{enumerate}
\end{corollary}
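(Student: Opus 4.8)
The plan is to obtain this Corollary as the specialization of Theorem~\ref{theorem:-4.9} to the space $X=(P,\sigma(P))$, a dcpo carrying its Scott topology. Since the specialization order of the Scott topology of a dcpo is exactly the original order of $P$, and (as recorded in the Preliminaries) the weak Scott topology coincides with the Scott topology on a dcpo, the machinery of Sections~3--4 is available for $(P,\sigma(P))$. Concretely, I would show that Theorem~\ref{theorem:-4.9} reads off as the Corollary provided the following three ``dictionary'' facts hold: (A) $(P,\sigma(P))$ is $SI_2$-quasicontinuous if and only if $P$ is a quasicontinuous domain; (B) the weakly irreducible topology satisfies $\tau_{SI_{2}}(P,\sigma(P))=\sigma(P)$; and (C) a net $\mathcal{GD}$-converges in $(P,\sigma(P))$ if and only if it $S^{\ast}$-converges in $P$. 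Granting (A)--(C), the two statements of the Corollary are precisely the two statements of Theorem~\ref{theorem:-4.9} read through this dictionary.

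Fact (C) is essentially definitional: under the specialization order of $\sigma(P)$, a quasi-eventual lower bound in the sense of Definition~\ref{definition-4.1} is exactly a finite eventual lower bound in $P$, and the condition $\bigcap_{F\in\mathcal F}\uparrow\! F\subseteq\uparrow\! x$ for a directed family $\mathcal F$ is the defining clause of $S^{\ast}$-convergence, so I would only need to check that the two definitions literally match once the order is identified. Fact (A) rests on the coincidence of the two approximation relations, namely $F\ll_r x$ in $(P,\sigma(P))$ if and only if $F$ approximates $\{x\}$, equivalently $F\ll x$ in the sense of Definition~\ref{definition-2.1}. One direction is immediate from Lemma~\ref{remark-2.1}(1): directed sets are irreducible, so the quantification defining $\ll_r$ over \emph{all} irreducible sets is a priori stronger than the quantification over directed sets, giving $F\ll_r x\Rightarrow F\ll x$. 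For the reverse I would, given an irreducible $E$ with $x\in E^\delta$, extract a directed set via Rudin's Lemma (Lemma~\ref{lemma-2.1}) and use that for a directed $D$ with $\bigvee D$ existing one has $D^\delta=\downarrow\!\bigvee D$, so that membership $x\in E^\delta$ is witnessed by a directed sup and $F\ll x$ forces $E\cap\uparrow\! F\neq\emptyset$; the three clauses of Definition~\ref{definition-3.2} then transcribe into the two conditions of quasicontinuity.

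The crux, and the step I expect to be the main obstacle, is Fact (B): showing that every Scott-open set is weakly irreducibly open. The inclusion $\tau_{SI_{2}}(P,\sigma(P))\subseteq\sigma(P)$ is automatic, but the defining condition of a weakly irreducibly open $U$ ranges over \emph{all} irreducible $E$, demanding $E^\delta\cap U\neq\emptyset\Rightarrow E\cap U\neq\emptyset$, whereas Scott-openness only controls directed sets. These agree exactly when irreducible Scott-closed sets reduce to directed ones, i.e.\ when the Scott topology is sober; this holds for quasicontinuous domains, and this is where sobriety (rather than bare dcpo-ness) is genuinely used. I would therefore prove $(1)\Rightarrow(2)$ by first invoking sobriety of the quasicontinuous domain $P$ to secure (B), then applying Proposition~\ref{proposition-4.7}. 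The converse $(2)\Rightarrow(1)$ is the more delicate half, since one cannot assume (B) in advance: here I would run the net-construction argument of Proposition~\ref{proposition-4.8} directly, building the index set from Scott-open neighbourhoods of $x$ in place of weakly irreducibly open ones; the constructed net still Scott-converges to $x$, so the hypothesis forces it to be a $\mathcal{GD}$-limit, and from the witnessing directed family of finite quasi-eventual lower bounds I would verify the three clauses of Definition~\ref{definition-3.2}, recovering $SI_2$-quasicontinuity of $(P,\sigma(P))$ and hence, by (A), quasicontinuity of $P$, whereupon sobriety and (B) hold a posteriori. The part requiring the most care is the openness clause $\Uparrow_r H\in\sigma(P)$, which I would obtain through the interpolation property of Theorem~\ref{theorem-3.1} exactly as in the proof of Proposition~\ref{proposition-4.8}.
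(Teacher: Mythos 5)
Your overall strategy---obtaining the corollary as a specialization of Theorem~\ref{theorem:-4.9}---is the right one, and indeed the paper offers nothing more (the corollary is stated without proof, with a citation to the original source). But you specialize to the wrong space. The paper's own scaffolding (the unlabelled Lemma asserting $SI_{2}(P,\alpha(P))=(P,\sigma_{2}(P))$, the Remark identifying $s_{2}$-quasicontinuity with $SI_{2}$-quasicontinuity for the Alexandroff topology, and the companion corollary on $s_{2}$-quasicontinuous posets) points to taking $X=(P,\alpha(P))$ rather than $X=(P,\sigma(P))$. With the Alexandroff topology all three of your dictionary facts become immediate and, crucially, unconditional: the irreducible subsets of $(P,\alpha(P))$ are exactly the directed sets, so $\ll_{r}$ is literally the relation $\ll$ of Definition~\ref{definition-2.1}, which for a dcpo (where $D^{\delta}=\mathord{\downarrow}\bigvee D$) is the approximation relation defining quasicontinuous domains; clause (3) of Definition~\ref{definition-3.2} is automatic because $\Uparrow_{r}H$ is an upper set by Proposition~\ref{proposition-3.1}(4); $\tau_{SI_{2}}(P,\alpha(P))=\sigma_{2}(P)=\sigma(P)$ holds for \emph{every} dcpo with no sobriety input; and $\mathcal{GD}$-convergence is verbatim $S^{\ast}$-convergence. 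Theorem~\ref{theorem:-4.9} then yields both directions of the corollary at once.

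Your Scott-topology route can be made to work, but at a real cost that you partly acknowledge: fact (B) is available only \emph{after} quasicontinuity is known (via sobriety of quasicontinuous domains), so in the direction $(2)\Rightarrow(1)$ you cannot invoke Theorem~\ref{theorem:-4.9} at all and must re-run the proof of Proposition~\ref{proposition-4.8} from scratch with Scott-open neighbourhoods---at which point you are no longer deriving a corollary but reproving the theorem in a special case. There is also one concrete misstep: in fact (A) you propose to handle an arbitrary irreducible $E$ with $x\in E^{\delta}$ by ``extracting a directed set via Rudin's Lemma.'' Lemma~\ref{lemma-2.1} applies to a directed family of nonempty finite sets, not to an irreducible set, and gives nothing here. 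The correct tool is again sobriety: for a quasicontinuous domain the Scott closure of an irreducible $E$ is $\mathord{\downarrow}y$ for a single point $y$, whence $E^{\delta}=\mathord{\downarrow}y$, and $F\ll x\leq y$ forces $E\cap\operatorname{int}_{\sigma(P)}\uparrow\! F\neq\emptyset$. So both (A) and (B) lean on sobriety---precisely the dependence that the Alexandroff specialization avoids.
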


\begin{corollary}[\cite{R2016}]
Let $P$ be a poset. Then the following conditions are equivalent:
\begin{enumerate}
\item[\rm(1)] $P$ is $s_2$-quasicontinuous;

\item[\rm(2)] The $\mathcal{GS}$-convergence in $P$ is topological for the weak Scott topology $\sigma_2(P)$, that is, for all $x\in P$ and all nets $(x_{j})_{j\in J}$ in $P$, $x\equiv_{\mathcal{GS}}$lim $x_{j}$ if and only if $(x_{j})_{j\in J}$ converges to $x$ with respect to the weak Scott topology.
\end{enumerate}
\end{corollary}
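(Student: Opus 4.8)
The plan is to obtain this corollary as an immediate specialization of Theorem \ref{theorem:-4.9} to the case $X=(P,\alpha(P))$, the poset $P$ endowed with its Alexandroff topology, translating each ingredient through the two bridging results already established: the Remark asserting that $P$ is $s_2$-quasicontinuous if and only if $(P,\alpha(P))$ is an $SI_2$-quasicontinuous space, and the Lemma asserting that $SI_2(P,\alpha(P))=(P,\sigma_2(P))$, i.e. the weakly irreducible topology of the Alexandroff space is exactly the weak Scott topology $\sigma_2(P)$.

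First I would fix $X=(P,\alpha(P))$ and recall from the Preliminaries that the specialization order of $\alpha(P)$ coincides with the underlying order of $P$. Consequently all the order-theoretic data entering the $SI_2$-machinery, namely the upper sets $\uparrow\! F$, the cuts $(\cdot)^{\delta}$, the directedness of finite families, and the relation $\ll_{r}$, when computed in the space $(P,\alpha(P))$ agree with the corresponding notions computed in the poset $P$. By the Remark, clause (1) of the corollary, that $P$ is $s_2$-quasicontinuous, is then equivalent to the statement that $X=(P,\alpha(P))$ is an $SI_2$-quasicontinuous space, which is precisely clause (1) of Theorem \ref{theorem:-4.9}.

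Next I would invoke the Lemma to rewrite clause (2) of Theorem \ref{theorem:-4.9} in poset language: since $\tau_{SI_{2}}(X)=\sigma_2(P)$, the phrase \emph{``$(x_{j})_{j\in J}$ converges to $x$ with respect to $\tau_{SI_{2}}(X)$''} becomes \emph{``$(x_{j})_{j\in J}$ converges to $x$ with respect to the weak Scott topology $\sigma_2(P)$''}. It then remains only to identify the $\mathcal{GD}$-convergence in the space $(P,\alpha(P))$ with the $\mathcal{GS}$-convergence in the poset $P$ from \cite{R2016}. Unwinding Definitions \ref{definition-4.1} and \ref{definition-4.2}, the notion of a quasi-eventual lower bound of a net, the directedness of a finite family $\mathcal{F}$, and the containment $\bigcap_{F\in\mathcal{F}}\uparrow\! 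F\subseteq\uparrow\! x$ are all purely order-theoretic; by the coincidence of the two orders noted above they are exactly the data defining $\mathcal{GS}$-convergence in $P$. Hence $x\equiv_{\mathcal{GD}}\lim x_{j}$ in $(P,\alpha(P))$ if and only if $x\equiv_{\mathcal{GS}}\lim x_{j}$ in $P$, so clause (2) of the corollary is exactly clause (2) of Theorem \ref{theorem:-4.9}.

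The main, and essentially only, obstacle is this last identification of the two convergence notions, since $\mathcal{GS}$-convergence is imported from \cite{R2016} and one must verify that its definition matches the $\mathcal{GD}$-definition verbatim under the order identification (in particular that $\mathcal{GS}$-convergence is also phrased via directed families of finite quasi-eventual lower bounds with $\bigcap_{F\in\mathcal{F}}\uparrow\! F\subseteq\uparrow\! x$, rather than, say, via single directed sets). Once that bookkeeping is confirmed, the corollary follows by direct substitution into Theorem \ref{theorem:-4.9} together with the Remark and the Lemma, with no further computation required.
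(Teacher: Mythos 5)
Your proposal is correct and matches the paper's intended derivation: the corollary is stated without proof precisely because it follows from Theorem \ref{theorem:-4.9} by specializing to $X=(P,\alpha(P))$, using the Remark ($P$ is $s_2$-quasicontinuous iff $(P,\alpha(P))$ is $SI_2$-quasicontinuous) and the Lemma ($SI_2(P,\alpha(P))=(P,\sigma_2(P))$), exactly as you do. Your identification of $\mathcal{GD}$-convergence in $(P,\alpha(P))$ with $\mathcal{GS}$-convergence in $P$ is the right bookkeeping step, and it does check out since both are phrased via directed families of finite quasi-eventual lower bounds $\mathcal{F}$ with $\bigcap_{F\in\mathcal{F}}\uparrow\! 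F\subseteq\uparrow\! x$ under the coinciding orders.
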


\section{Conclusions}
In this paper, we introduce the concepts of $SI_{2}$-quasicontinuous spaces and present one way to generalize $\mathcal{D}$-convergence of nets for arbitrary topological spaces by the cuts. Some convergence theoretical characterizations of $SI_{2}$-quasicontinuity of spaces are given and come to the main conclusions are: (1) A space is $SI_{2}$-quasicontinuous if and only if its weakly irreducible topology is hypercontinuous under inclusion order; (2) A $T_{0}$ space $X$ is $SI_{2}$-quasicontinuous if and only if the $\mathcal{GD}$-convergence in $X$ is topological.

\section*{Acknowledgement}
We would like to thank the referees for the numerous and helpful suggestions that have improved this paper substantially.

\end{document}